\newtheorem{theorem}{Theorem}[section]
\newtheorem*{theorem*}{Theorem}
\newtheorem{corollary}[theorem]{Corollary}
\newtheorem{lemma}[theorem]{Lemma}
\newtheorem{proposition}[theorem]{Proposition}
\newtheorem{quest}[theorem]{Question}
\theoremstyle{definition}
\newtheorem{definition}[theorem]{Definition}
\newtheorem{ex}[theorem]{Example}
\theoremstyle{remark}
\newtheorem*{claim}{Claim}
\newcommand{\Ku}{\mathcal{K}u}
\numberwithin{equation}{section}
\newcommand{\bv}{\mathbf{v}}
\newcommand{\bw}{\mathbf{w}}
\DeclareMathOperator{\oh}{\mathcal{O}}
\begin{document}

\title[]{Linear subspaces of the intersection of two quadrics via Kuznetsov component}

\author{Yanjie Li}
\address{Sun-Yat Sen University, School of Mathematics(Zhuhai), China}
\email{liyj293@mail2.sysu.edu.cn}
\author{Shizhuo Zhang}
\address{Institut de Mathématiqes de Toulouse, UMR 5219, Université de Toulouse, Université Paul Sabatier, 118 route de
Narbonne, 31062 Toulouse Cedex 9, France}
\email{shizhuozhang@mpim-bonn.mpg.de,shizhuo.zhang@math.univ-toulouse.fr}

\subjclass[2010]{Primary 14F05; secondary 14J45, 14D20, 14D23}
\keywords{Derived categories, Kuznetsov components, intersection of quadrics, linear subspaces, moduli space of vector bundles}

\begin{abstract}
   Let $Q_i(i=1,2)$ be $2g$ dimensional quadrics in $\mathbb{P}^{2g+1}$ and let $Y$ be the smooth intersection $Q_1\cap Q_2$. We associate the linear subspace in $Y$ with vector bundles on the hyperelliptic curve $C$ of genus $g$ by the left adjoint functor of $\Phi:D^b(C)\rightarrow D^b(Y)$. As an application, we give a different proof of the classification of line bundles and stable bundles of rank $2$ on hyperelliptic curves given by Desale and Ramanan. When $g=3$, we show that the projection functor induces a closed embedding $\alpha:Y\rightarrow SU^s_C(4,h)$ into the moduli space of stable bundles on $C$ of rank $4$ of fixed determinant.
\end{abstract}

\maketitle

\setcounter{tocdepth}{1}
\tableofcontents

%=====================================================================
\section{Introduction}
%=========================================================

Let $Y$ be a smooth del Pezzo threefold of degree $4$, it is the intersection of two quadrics $Y:=Q_1\cap Q_2$ in $\mathbb{P}^5$. Consider the pencil of quadrics $\{Q_\lambda\}_{\lambda\in\mathbb{P}^1}$ generated by $Q_1$ and $Q_2$. If $Y$ is smooth, then generic $Q_{\lambda}$ is smooth and there are precisely 6 distinct points $\lambda_1,\ldots,\lambda_6\in\mathbb{P}^1$ for which the quadric $Q_\lambda$ is degenerate. Consider the double covering $C\rightarrow\mathbb{P}^1$ with ramification points $\lambda_1,\ldots,\lambda_6$, then we get a smooth hyperelliptic curve $C$. By \cite[Theorem 1.1]{newstead1968stable}, there is a moduli space interpretation of $Y:$ it is isomorphic to the moduli space of stable rank two vector bundles over $C$ of fixed determinant of odd degree. Now we consider semi-orthogonal decomposition of $D^b(Y):$
$$D^b(Y)=\langle\Ku(Y),\oh_Y,\oh_Y(1)\rangle,$$
where $\Ku(Y)$ as the right orthogonal complement of the line bundles $\oh_Y,\oh_Y(1)$ is called Kuznetsov component. It is shown in \cite{bondal1995semiorthogonal} that $\Ku(Y)\simeq D^b(C)$ and the embedding $\Phi:\Ku(Y)\hookrightarrow D^b(Y)$ is given by the Fourier-Mukai functor $\phi_{S}$, where $S$ as the Fourier-Mukai kernel is given by the universal spinor bundle on $C\times Y$. A simple computation shows that the numerical Grothendieck group $\mathcal{N}(\Ku(Y))\cong\mathbb{Z}^2=\langle\bv,\bw\rangle$ is a rank two lattice generated by two vectors $\bv$ and $\bw$. Denote by $\sigma$ the unique stability condition(up to $\widetilde{GL}^+(2,\mathbb{R})$-action).  Let $\mathrm{pr}=\mathbb{L}_{\oh_Y}\mathbb{L}_{\oh_Y(1)}$ be the projection functor $D^b(Y)\rightarrow\Ku(Y)$ which induces a closed immersion of $Y$ into the Bridgeland moduli space $\mathcal{M}_{\sigma}(\Ku(Y),\bw),$ 
and it is shown in \cite[Section 5.2]{rota} that $$Y\cong\{E\in\mathcal{M}_{\sigma}(\Ku(Y),w)|\mathrm{Hom}(E,\Phi^!(\oh_Y))=k^5\}\cong\{E\in\mathcal{M}_C(2,1)|\mathrm{hom}(E,\mathcal{R}[1])=5\},$$

where $\mathcal{R}=\Phi^!(\oh_Y)[-1]$ is a \emph{second Raynaud bundle}. Furthermore, it is shown in \cite[Section 6.1]{feyzbakhsh23torelli} that fixing determinant is equivalent to imposing \emph{Brill-Noether} condition. Thus the natural question is that if intersection of quadrics in higher dimensional projective spaces admits the moduli space interpretation. On the other hand, using the moduli space reconstruction for $Y$, one can identify the cotangent bundle $T^*Y$ of $Y$ with moduli space $\mathcal{M}_{\mathrm{Higgs}}$ of Higgs bundles, that is points $(E,\phi)$ with $E\in\mathcal{M}(2,\mathcal{L})$ and $\phi:E\rightarrow E\otimes\omega_C$ a homomorphism with $\mathrm{Tr}\phi=0$. The \emph{Hitchin map} $T^*\mathcal{M}_{\mathrm{Higgs}}\xrightarrow{\mathrm{det}(\phi)} H^0(\omega^{\otimes 2})$ is a Lagrangian fibration. In the work \cite{beauville2023symmetric}, the authors show the intersection of two quadrics in higher dimensional projective space also admits a Lagrangian fibration. Thus it would be very interesting to find out the moduli interpretation for general intersection of two quadrics, as the first step to understand the Lagrangian fibration in an alternative perspective. These constitute the first motivation of our work.

Furthermore, it is known that the intermediate Jacobian $J(Y)$ is isomorphic to $J(C)$, consisting of degree $0$ line bundles over $C$. Moreover $J(C)$ can be identified with Hilbert scheme of lines on $Y$. It is natural to ask if the Hilbert scheme of linear subspaces of intersection of two quadrics in higher projective space can be identified with moduli space of stable vector bundles over the associated hyperelliptic curve. Attempts to answer this question is the second motivation of our work. 

\subsection{Main Results}
Let $Q_1:\sum_{j=1}^{2g+1}x_j^2=0,~Q_2:\sum_{j=1}^{2g+1}\lambda_jx_j^2=0$ be two quadrics in $\mathbb{P}^{2g+1}$, where $\lambda_j(1\leq j \leq 2g+1)$ are distinct complex numbers. Let $Y$ be the smooth complete intersection $Q_1\cap Q_2$ in $\mathbb{P}^{2g+1}$. let $C$ be double cover of $\mathbb{P}^1$ ramified at $\lambda_j(1\leq j \leq 2g+1)$, which is a hyperelliptic curve of genus $g$. It can be identfied with the fine moduli space of the spinor bundles on $Y$. The vector bundles on $C$ are closely related to linear subspaces in $Y$: it is shown in \cite[Theorem 4.8]{reid1972complete} that the Jacobian of $C$ is isomorphic to the variety of $(g-1)$ dimensional subspaces in $Y$. In \cite[Theorem 1]{desale1976classification}, the authors showed that the moduli space of stable vector bundles on $C$ of rank $2$ and fixed determinant of odd degree is isomorphic to the variety of $(g-2)$ dimensional subspaces in $Y$. Then in \cite{ramanan1981orthogonal}, the author also gave a description of the variety of $(g-n)$ dimensional subspaces in $Y$ as the moduli space of orthogonal bundles of rank $2n$ with some additional assumptions. 

The universal spinor bundle $S$ on $C\times Y$ induces a full and faithful embedding $\Phi:D^b(C)\rightarrow D^b(Y)$, the semi-orthogonal decomposition of $D^b(Y)$ is given by 
$$D^b(Y)=\langle D^b(C),\oh_Y,\ldots,\oh_Y(2g-3)\rangle.$$
The first main result of this paper is to give a categorical description of the relation between linear subspaces in $Y$ and vector bundles on $C$, we have 
	
\begin{theorem}[Propositions \ref{lin sp in Y}, \ref{auto fr m to m+1}, \ref{ext prop} and Corollary \ref{cor of compare}]\label{thm_first_result}
    Let $V$ be a linear subspace in $Y$ of dimension $l$. Denote the left adjoint functor of $\Phi$ by $\Phi^*$ and the involution on $C$ by $\tau:C\rightarrow C$.\\
    $(1)$ For $2g-3-l\leq m \leq 2g-3$, $\mathcal{F}_{m,V}:=\Phi^*(\oh_V(m))[-m-2]$ is a vector bundle on $C$ of rank $2^{g-1-l}$.\\ 
    $(2)$ There's a line bundle $\mathcal{L}$ on $C$ of degree $-1$ (depends on the universal spinor $S$), satisfying $\mathcal{F}_{m+1,V}\cong \tau^*\mathcal{F}_{m,V}\otimes \mathcal{L}~(2g-3-l\leq m \leq 2g-4)$. In particular, $\deg(\mathcal{F}_{m,V})-\deg(\mathcal{F}_{m+1,V})=2^{g-1-l}$.\\
    $(3)$ If there's a $(l-1)$ dimensional subspace $L$ with $L\subset V$, we have a non-trivial extension
    $$0\rightarrow \mathcal{F}_{m,V} \rightarrow \mathcal{F}_{m,L} \rightarrow \mathcal{F}_{m-1,V}\rightarrow 0~(2g-2-l \leq m\leq 2g-3).$$
    $(4)$ For two linear subspaces $V_1,V_2$, we have the isomorphism $\mathrm{Hom}(\oh_{V_1},\oh_{V_2})\cong \mathrm{Hom}(\mathcal{F}_{m,V_1},\mathcal{F}_{m,V_2})$. In particular, the map $V\mapsto [\mathcal{F}_{m,V}]$ is injective, and $V_2\subseteq V_1$ iff $\mathrm{Hom}(\mathcal{F}_{m,V_1},\mathcal{F}_{m,V_2})\ne 0$.
\end{theorem}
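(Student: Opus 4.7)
The proof of all four parts is driven by the Fourier--Mukai description of $\Phi^*$. Combining the formula $\Phi^*(F) = \mathrm{R}p_{C*}(p_Y^*F\otimes S^{\vee}\otimes p_Y^*\omega_Y[\dim Y])$ with $\omega_Y\cong \mathcal{O}_Y(-(2g-2))$ yields the fundamental identity
\[
\mathcal{F}_{m,V} \;=\; \mathrm{R}p_{C*}\bigl(S^{\vee}|_{C\times V}(m-2g+2)\bigr)[\,2g-3-m\,],
\]
which is the starting point for every part. The plan is to derive the geometric content of (1) and (2) directly from properties of the spinor bundle, and then deduce (3) and (4) formally from (1), (2) together with adjunction.

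For \emph{part (1)}, set $k := m-2g+2 \in [-l-1,-1]$. The statement reduces to showing that $\mathrm{R}^i p_{C*}(S^{\vee}|_{C\times V}(k))$ vanishes for $i\neq -k-1$ and is locally free of rank $2^{g-1-l}$ for $i=-k-1$. Cohomology and base change along $p_C$ (whose fibres over $C\times V$ are $V\cong \mathbb{P}^l$) reduces this to the pointwise claim $\dim H^i(V, S_c^{\vee}|_V(k)) = 2^{g-1-l}\cdot\delta_{i,-k-1}$ for all $c\in C$. Any such $c$ sits above some $\lambda\in\mathbb{P}^1$ with $Q_\lambda$ smooth containing $V$, so $S_c^{\vee}|_V$ is the restriction of a spinor bundle of the quadric $Q_\lambda$ to the linear subspace $V\subset Q_\lambda$; standard decomposition results for such restrictions then deliver the required pointwise cohomology. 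The base case $l=0$ is immediate: $\Phi^*(\mathcal{O}_p) = S^{\vee}|_{C\times \{p\}}[2g-1]$ has rank $2^{g-1}$.

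For \emph{part (2)}, the main ingredient is the kernel identity
\[
S^{\vee}\otimes p_Y^*\mathcal{O}_Y(1) \;\cong\; (\tau\times \mathrm{id}_Y)^* S^{\vee}\otimes p_C^*\mathcal{L}
\]
for a unique line bundle $\mathcal{L}$ on $C$, reflecting the interchange of the two spinor families of each $Q_\lambda$ by $\tau$ paralleled by the Clifford twist $\mathcal{O}_Y(1)$. Granting this identity, the projection formula combined with flat base change along $\tau\times \mathrm{id}_Y$ yields $\Phi^*(E(1)) \cong \mathcal{L}\otimes \tau^*\Phi^*(E)$ for every $E\in D^b(Y)$; specialising to $E = \mathcal{O}_V(m)$ and keeping track of the $[-m-2]$ shift produces $\mathcal{F}_{m+1,V}\cong \tau^*\mathcal{F}_{m,V}\otimes \mathcal{L}$. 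Finally, $\deg\mathcal{L}=-1$ is forced by the degree count $\mathrm{rk}(\mathcal{F}_{m,V})\cdot\deg\mathcal{L} = \deg\mathcal{F}_{m+1,V}-\deg\mathcal{F}_{m,V} = -2^{g-1-l}$.

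Parts (3) and (4) are then essentially formal. For (3), apply $\Phi^*$ to $0\to \mathcal{O}_V(m-1)\to \mathcal{O}_V(m)\to \mathcal{O}_L(m)\to 0$ and shift by $[-m-2]$; by (1) the resulting triangle $\mathcal{F}_{m,V}\to \mathcal{F}_{m,L}\to \mathcal{F}_{m-1,V}\xrightarrow{+1}$ is a short exact sequence of coherent sheaves, and non-splitting follows from (4) via $\mathrm{End}(\mathcal{F}_{m,L}) = \mathrm{End}(\mathcal{O}_L) = k$. For (4), adjunction gives $\mathrm{Hom}_C(\mathcal{F}_{m,V_1},\mathcal{F}_{m,V_2}) = \mathrm{Hom}_Y(\mathcal{O}_{V_1}(m),\Phi\Phi^*\mathcal{O}_{V_2}(m))$, and the cone of $\Phi\Phi^*\mathcal{O}_{V_2}(m)\to \mathcal{O}_{V_2}(m)$ lies in $\langle\mathcal{O}_Y,\dots,\mathcal{O}_Y(2g-3)\rangle$; Serre duality on $Y$ reduces the resulting error terms to cohomology of line bundles on $V_1\cong\mathbb{P}^{l_1}$, and the range $m\in[2g-3-l,2g-3]$ is precisely tuned so that these vanish in degrees $0$ and $1$, forcing the adjunction unit to be an isomorphism. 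The last assertion then follows from the standard calculation $\mathcal{H}om(\mathcal{O}_{V_1},\mathcal{O}_{V_2}) = \mathcal{O}_{V_2}$ if $V_2\subseteq V_1$ and $0$ otherwise. The main obstacle in this plan is the spinor-bundle restriction analysis in part (1) together with the kernel identity in part (2); both require concretely unpacking the Bondal--Orlov/Kuznetsov construction of the universal spinor $S$ in terms of the pencil of quadrics through $Y$.
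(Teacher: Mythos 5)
Your part (2) contains a genuine error. The proposed kernel identity $S^{\vee}\otimes p_Y^*\mathcal{O}_Y(1)\cong(\tau\times\mathrm{id}_Y)^*S^{\vee}\otimes p_C^*\mathcal{L}$ cannot hold: restricting to $\{x\}\times Y$ it would give $S_x^{\vee}\otimes\mathcal{O}_Y(1)\cong S_{\tau(x)}^{\vee}\otimes\mathcal{L}_x$, and since all the $S_x$ belong to one flat family over the connected curve $C$ we have $c_1(S_x)=c_1(S_{\tau(x)})$ in $H^2(Y,\mathbb{Z})\cong\mathbb{Z}$, so comparing first Chern classes forces $2^{g-1}=0$. (The actual relation between the two spinor families is the extension $0\to S_x\to\mathcal{O}_Y^{\oplus 2^g}\to S_{\tau(x)}(1)\to 0$, not a line-bundle twist.) The functorial consequence you draw, $\Phi^*(E(1))\cong\mathcal{L}\otimes\tau^*\Phi^*(E)$ for \emph{every} $E\in D^b(Y)$, is likewise false: for $E=\mathcal{O}_Y(-1)$ the left-hand side is $\Phi^*(\mathcal{O}_Y)=0$, whereas $\Phi^*(\mathcal{O}_Y(-1))\neq 0$ because $\mathrm{Hom}(\mathcal{O}_Y(-1),S_x)=H^0(Y,S_x(1))\neq 0$ and $S_x=\Phi(\mathcal{O}_x)$ lies in the image of $\Phi$. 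The twist by $\mathcal{O}_Y(1)$ only becomes the autoequivalence $\tau^*(-)\otimes\mathcal{L}$ \emph{after} re-projecting into the Kuznetsov component. This is exactly what the paper does: it studies the rotation functor $\mathbf{O}=\Phi^*\circ(-\otimes\mathcal{O}_Y(1)[-1])\circ\Phi$, shows it is an autoequivalence by iterating it $2g-2$ times into $\mathbb{S}_C^{-1}[1]$, identifies it as $\tau^*(-)\otimes\mathcal{L}$ via Bondal--Orlov reconstruction together with $\mathbf{O}(\mathcal{O}_x)\cong\mathcal{O}_{\tau(x)}\otimes\mathcal{L}$, and then --- the step your argument silently skips --- verifies by an explicit mutation computation that $\mathcal{F}_{m+1,V}\cong\mathbf{O}(\mathcal{F}_{m,V})$, using that $\mathcal{O}_V(m)$ is already right-orthogonal to $\mathcal{O}_Y(m+1),\dots,\mathcal{O}_Y(2g-3)$ in the stated range of $m$. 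Your determination of $\deg\mathcal{L}=-1$ is also circular as written (the degree drop $-2^{g-1-l}$ is a consequence of $\deg\mathcal{L}=-1$, not an independent input); the paper instead deduces $(\mathcal{L}\otimes\tau^*\mathcal{L})^{\otimes(g-1)}\cong\omega_C^{-1}$ from $\mathbf{O}^{2g-2}=\mathbb{S}_C^{-1}[1]$.

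Parts (1), (3) and (4) follow essentially the paper's route: (1) is the pointwise computation of $\mathrm{Ext}^{\bullet}(\mathcal{F}_{m,V},\mathcal{O}_x)$ via restriction of spinor bundles to $V$ (the ``standard decomposition results'' you invoke are Ottaviani's Theorems 1.4 and 2.5, giving $S_x|_V\cong(\bigoplus_i\Omega^i_{\mathbb{P}^l}(i))^{\oplus 2^{g-1-l}}$); (3) is the same Koszul-sequence argument with non-splitting deduced from $\mathrm{End}(\mathcal{F}_{m,L})\cong\mathrm{End}(\mathcal{O}_L)=\mathbb{C}$; (4) is the same step-by-step vanishing along the chain of left mutations. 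One inaccuracy in (1): a point $c\in C$ lying over a branch point corresponds to a \emph{singular} quadric $Q_\lambda$, so you cannot always choose $Q_\lambda$ smooth; the paper handles this case separately by cutting the cone with a hyperplane containing $V$ and missing the vertex before applying the restriction theorem. This is easily repaired, but it needs to be said.
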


Let $\mathcal{H}_l$ be the variety of $l$ dimensional subspaces in $Y$. We show that the map $\alpha^l_m:V\mapsto \mathcal{F}_{m,V}$ is a closed embedding from $\mathcal{H}_l$ to the moduli space of stable vector bundles when the associated bundles $\mathcal{F}_{m,V}$ are stable. As an application, we give an alternative proof of \cite[Theorem 4.8]{reid1972complete} and \cite[Theorem 2]{desale1976classification} using derived category. 
When $g=3$, we can also realize the $5$ dimensional intersection $Y$ as a closed subvariety of the moduli space $SU_C^{s}(4,h)$ on the genus $3$ hyperelliptic curve $C$. All of above constitute the second main result of our paper. 

\begin{theorem}[Proposition \ref{stable then closed embedding}, Theorem \ref{pl iso to pic}, Theorem \ref{stab bd for line}]\label{thm_second_result}
    Denote by $U^s_C(r,d)$ the moduli space of stable bundles on $C$ of rank $r$ and degree $d$ and by $SU^s_C(r,\xi)$ the moduli space of stable bundles on $C$ of rank $r$ and fixed determinant $\xi$. We set $d_{m,l}:=\deg(\mathcal{F}_{m,l})$.\\
    $(1)$ If the vector bundle $\mathcal{F}_{m,V}$ is stable for each linear subspace $V$ in $Y$ of dimension $l$, then we have a closed embedding $\alpha^l_m:\mathcal{H}_l\rightarrow U_C^s(2^{g-1-l},d_{m,l}),~[V]\mapsto [\mathcal{F}_{m,V}]$. In particular, we have isomorphisms $\alpha^{g-1}_m:\mathcal{H}_{g-1}\rightarrow \mathrm{Pic}^{d_{m,g-1}}(C)~(g-2\leq m \leq 2g-3)$.\\
    $(2)$ If $V$ is of dimension $g-2$, then the associated rank $2$ bundle $\mathcal{F}_{m,V}$ is stable and of fixed determinant of odd degree. We have isomorphisms $\alpha^{g-2}_m:\mathcal{H}_{g-2}\rightarrow SU_C(2,h_m)~(g-1\leq m \leq 2g-3)$, where $h_m=\det(\mathcal{F}_{m,V})$ is a fixed line bundle of odd degree on $C$ for each $m$.\\
    $(3)$ If $g$ is $3$, then for every point $p\in Y$, the rank $4$ bundle $\mathcal{F}_{3,p}$ is stable of fixed determinant. We have a closed embedding $\alpha^0:Y\rightarrow SU^s_C(4,h),~p\mapsto \mathcal{F}_{3,p}$, where $h=\det(\mathcal{F}_{3,p})$ is a fixed line bundle on $C$ with $\deg(h)\equiv 0~(\mathrm{mod}~4)$. 
\end{theorem}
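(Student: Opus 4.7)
All three parts follow a common template: promote the set-theoretic correspondence $V\mapsto\mathcal{F}_{m,V}$ to a morphism of schemes via a universal family over $\mathcal{H}_l$, verify stability and fixed determinant, combine Theorem~\ref{thm_first_result}(4) with a deformation-theoretic comparison to conclude the morphism is a closed immersion, and compare dimensions where an isomorphism is claimed.

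For (1), let $\mathcal{V}\subset\mathcal{H}_l\times Y$ be the universal $l$-subspace. Applying the relative version of $\Phi^{*}$ (integral transform with kernel the universal spinor bundle $S$) to $\mathcal{O}_{\mathcal{V}}(m)$, and invoking Theorem~\ref{thm_first_result}(1) together with cohomology-and-base-change, produces a flat family of rank $2^{g-1-l}$ vector bundles on $C$ over $\mathcal{H}_l$; under the standing stability hypothesis this descends to a morphism to $U_C^{s}(2^{g-1-l},d_{m,l})$. Point-injectivity is Theorem~\ref{thm_first_result}(4). For injectivity on tangent vectors, the key observation is that $\Phi^{*}$ restricted to $\Ku(Y)$ is an equivalence, so $\mathrm{Ext}^1_Y(\mathrm{pr}(\mathcal{O}_V(m)),\mathrm{pr}(\mathcal{O}_V(m)))\cong\mathrm{Ext}^1_C(\mathcal{F}_{m,V},\mathcal{F}_{m,V})$, and $d\alpha^l_m$ factors as the composition $H^0(V,N_{V/Y})\hookrightarrow\mathrm{Ext}^1_Y(\mathcal{O}_V(m),\mathcal{O}_V(m))\to\mathrm{Ext}^1_Y(\mathrm{pr}(\mathcal{O}_V(m)),\mathrm{pr}(\mathcal{O}_V(m)))$; injectivity of the composite reduces to an infinitesimal variant of the Hom-identity of Theorem~\ref{thm_first_result}(4). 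Projectivity of $\mathcal{H}_l$ then upgrades this to a closed immersion. For $l=g-1$, rank one makes stability automatic, and both $\mathcal{H}_{g-1}$ (by Reid) and $\mathrm{Pic}^{d_{m,g-1}}(C)$ are smooth connected of dimension $g$, so the closed immersion is an isomorphism.

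For (2), with $\dim V=g-2$ and rank two, the crux is stability of $\mathcal{F}_{m,V}$. The extension of Theorem~\ref{thm_first_result}(3) applied to $V\subset V'\in\mathcal{H}_{g-1}$, together with Theorem~\ref{thm_first_result}(2), yields $\deg\mathcal{F}_{m,V}=2\deg\mathcal{F}_{m,V'}+1$ (odd, so there is no strict semistability), and only a strictly destabilizing line subbundle $\mathcal{L}$ must be ruled out. I plan to do this by transporting $\mathcal{L}$ through $\Phi$ into $D^b(Y)$ to get a non-zero morphism $\Phi(\mathcal{L})\to\mathrm{pr}(\mathcal{O}_V(m))[k]$, then combining the degree estimate of Theorem~\ref{thm_first_result}(2), the surjectivity of $\alpha^{g-1}_m$ from (1), and the Hom-identity of Theorem~\ref{thm_first_result}(4) to produce an incompatible $(g-1)$-subspace through $V$ and derive a contradiction. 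Fixed determinant will follow from connectedness of $\mathcal{H}_{g-2}$ together with vanishing of the differential of $V\mapsto\det\mathcal{F}_{m,V}$ (identified with a trace projection of the tangent map from (1)); the odd-degree parity is a Chern-character computation. Part (1) then yields a closed immersion $\alpha^{g-2}_m:\mathcal{H}_{g-2}\hookrightarrow SU_C(2,h_m)$; both sides being irreducible of dimension $3g-3$, it is an isomorphism.

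For (3), specialize to $g=3$ and $V=\{p\}$ a point, so $\mathcal{F}_{3,p}$ has rank four. Stability is the principal obstacle. A destabilizing subsheaf has rank $r\in\{1,2,3\}$: for $r=2$ the argument reduces to the stability established in (2) via the extensions of Theorem~\ref{thm_first_result}(3) attached to $(g-2)$-subspaces through $p$, while for $r=1$ and $r=3$ one combines slope estimates from Theorem~\ref{thm_first_result}(2) with the Hom-identity of Theorem~\ref{thm_first_result}(4) to exclude the remaining cases. Fixing the determinant and pinning its degree modulo $4$ use connectedness of $Y$ and Chern-character computations with the spinor kernel $S$. Part (1) then directly delivers the desired closed immersion $\alpha^0:Y\hookrightarrow SU^{s}_C(4,h)$. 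The hardest technical step is the rank-three case in (3): the Hom-identity only directly constrains destabilizers of the geometric form $\mathcal{F}_{m,V'}$, so excluding more exotic destabilizing subsheaves will require additional slope and Chern-character estimates beyond those provided by Theorem~\ref{thm_first_result}.
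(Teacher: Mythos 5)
Your outline for part (1) and for the stability half of part (2) matches the paper's argument: the relative Fourier--Mukai transform $\phi_{S_m}\times\mathrm{id}_{\mathcal{H}_l}$ applied to the universal family plus base change gives the morphism; injectivity on closed points and on tangent vectors is exactly the content of Lemma~\ref{compare hom} (the chain $\mathrm{Hom}(\mathcal{I}_V,\mathcal{O}_V)\hookrightarrow\mathrm{Ext}^1(\mathcal{I}_V,\mathcal{I}_V)\hookrightarrow\mathrm{Ext}^1(\mathcal{F}_{m,V},\mathcal{F}_{m,V})$); and for $l=g-2$ a destabilizing line subbundle, twisted down to degree $-1$, is some $\mathcal{F}_{g-1,V'}$ by surjectivity of $\alpha^{g-1}_{g-1}$, whence $L\subset V'$ by Corollary~\ref{cor of compare} and then the non-trivial extension of Proposition~\ref{ext prop} forces stability --- a contradiction. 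However, your plan for the \emph{fixed determinant} statement in (2) is not substantiated: you propose to show the differential of $V\mapsto\det\mathcal{F}_{m,V}$ vanishes via a ``trace projection,'' but give no reason why that trace should be zero. The paper's actual input is the self-duality $\mathcal{F}_{m,V}^{\vee}\cong\mathcal{F}_{4g-6-l-m,V}\otimes M_S$ (or its $\tau$-twisted version for odd $g$) obtained by dualizing the spinor kernel (Proposition~\ref{the dual of the vector bundes}); for $l=g-2$ and a suitable $m$ this gives $(\det\mathcal{F}_{m,V}\otimes M_S)^{\otimes 2}\cong\mathcal{O}_C$, so the determinant map lands in the finite set of $2$-torsion points and is constant by connectedness of $\mathcal{H}_{g-2}$. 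Without some such global identity you have no handle on $\det\mathcal{F}_{m,V}$.

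The serious gap is in part (3), and you have located the difficulty in the wrong place: you claim the rank-$2$ case ``reduces to the stability established in (2)'' and that rank $3$ is the hard case, whereas it is exactly the opposite. Ranks $1$ and $3$ are handled by slope estimates against the extension $0\to\mathcal{F}_{3,L}\to\mathcal{F}_{3,p}\to\mathcal{F}_{2,L}\to 0$, which force the relevant line subbundle to be $\mathcal{F}_{m,V}$ for a single plane $V$ (via Theorem~\ref{pl iso to pic} and fixed determinant); one then picks a line $L$ through $p$ not contained in $V$ --- possible since the VMRT at $p$ has dimension $\geq 2$ (Lemma~\ref{VMRT of Y}) --- and the Hom-identity gives a contradiction. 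But a rank-$2$ destabilizer $E$ necessarily has degree $-2$ and embeds into $\mathcal{F}_{2,L}$ for \emph{every} line $L$ through $p$ with no slope obstruction ($\mu(E)=-1<\mu(\mathcal{F}_{2,L})=-\tfrac12$), and $E$ need not be of the geometric form $\mathcal{F}_{m,V'}$, so neither part (2) nor the Hom-identity applies. The paper's resolution is a genuinely new construction: the quotient $G=\mathcal{F}_{3,p}/E$ is a stable rank-$2$ bundle containing every $\mathcal{F}_{3,L}$ with quotient $\mathcal{O}_x$ for a point $x\in C$ independent of $L$ (fixed by the determinant computation), so all the $\mathcal{F}_{3,L}$ arise as elementary transformations of $G$ along the fibre $\pi^{-1}(x)\subset\mathbb{P}_C(G)$ and hence lie in a one-dimensional subvariety of $U^s_C(2,-3)$; this contradicts the fact that $\alpha^1_3$ embeds the at-least-two-dimensional VMRT. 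Your proposal contains no substitute for this step, so the stability of $\mathcal{F}_{3,p}$ --- and with it part (3) --- remains unproved as written.
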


\subsection{Organization of the article}
In Section~\ref{section_linear_subspaces_projection_functors} we compute the image of structure sheaves of linear subspaces of intersection of two quadrics in projective spaces under projection functors and we prove Theorem~\ref{thm_first_result}. In Section~\ref{section_projection_induce_morphism}, we show the projection functor induces a closed embedding of Hilbert scheme of $l$-dimensional linear subspaces of intersection of two quadrics into moduli space of stable vector bundles over some curve under mild assumption. In particular, we prove Theorem~\ref{thm_second_result}. In Section~\ref{section_seek_BN_conditions}, we make the first attempt to find out the Brill-Noether condition for the image of the projection functor inside the moduli space. 

\subsection{Acknowledgements}
The first author would like to thank Duo Li for useful discussions. The second author would like to thank Arend Bayer, Jie Liu, Zhiyu Liu and Hao Sun for useful discussion. He thanks Daniele Faenzi, Laurent Manivel and Claire Voisin for their interest in this work. The second author is supported by ANR project FanoHK, grant ANR-20-CE40-0023, Deutsche Forschungsgemeinschaft under Germany's Excellence Strategy-EXC-2047$/$1-390685813. Part of the work is finished when the second author visit Sun-Yat Sen university(Zhuhai), Max-Planck institute for mathematics and Hausdorff research institute for mathematics. He is grateful for their excellent hospitality and support.

\textbf{Notations}: We assume all schemes are over $\mathbb{C}$. Let $Y$ be a smooth complete intersection of two quadrics $Q_1\cap Q_2$ in $\mathbb{P}^{2g+1}(g\geq 2)$. 
Let $C$ be hyperelliptic curve associated with $Y$, which is a double cover of $\mathbb{P}^1$ ramified at $2g+1$ critical values of genus $g$. Let $S$ be the universal spinor bundle on $C\times Y$.
%we have the semi-orthogonal decomposition 
%$D^b(Y)=\langle D^b(C),\oh_Y,\ldots,\oh_Y(2g-3)\rangle$
Denote the full and faithful embedding functor
 given by the Fourier mukai transform with kernel $S$ by $\Phi:=\phi_S:D^b(C)\rightarrow D^b(Y)$ and the left and right adjoint of $\Phi$ by $\Phi^{*}$ and $\Phi^{!}$. Let $\mathcal{H}_l$ be the Hilbert Scheme of linear subspace of dimension $l$ in $Y$. Denote by $U^s_C(r,d)$ the moduli space of stable bundles on $C$ of rank $r$ and degree $d$ and by $SU^s_C(r,\xi)$ the moduli space of stable bundles on $C$ of rank $r$ and fixed determinant $\xi$.

\section{Linear subspaces in the Intersection of two quadrics}
\label{section_linear_subspaces_projection_functors}
In this section we associate linear subspaces in $Y$ with vector bundles on $C$ via projecting the structure sheaves of linear subspaces by adjoint functors.

\subsection{Projection of the linear subspaces}
Recall that the maximal linear subspaces of $Y$ is of dimension $g-1$. We're going to associate each linear subspace $V$ not only one but a sequence of vector bundles $\mathcal{F}_{m,V}$ indexed by $m$. 
\begin{proposition}\label{lin sp in Y}
Let $V(\subset Y)$ be a linear subspace in $Y$ of dimension $l~(l\leq g-1)$. For $2g-3-l\leq m \leq 2g-3$, $\mathcal{F}_{m,V}:=\Phi^{*}(\oh_V(m))[-m-2]$ is a vector bundle of rank $2^{g-1-l}$.
\end{proposition}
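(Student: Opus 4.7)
The plan is to write $\Phi^*$ explicitly as a derived pushforward and then reduce the claim to a pointwise cohomology computation on the fibres of $\pi_C \colon C\times V \to C$. Using the general formula for the left adjoint of a Fourier--Mukai functor, together with $\omega_Y \cong \oh_Y(2-2g)$ and $\dim Y = 2g-1$, I would first establish
\[
\Phi^*(\oh_V(m)) \;\cong\; R(\pi_C)_*\bigl(S^\vee|_{C\times V} \otimes \pi_V^* \oh_V(m+2-2g)\bigr)[2g-1].
\]
Consequently, to prove that $\mathcal{F}_{m,V} = \Phi^*(\oh_V(m))[-m-2]$ is a vector bundle of rank $2^{g-1-l}$ on $C$, it will suffice to show that $R(\pi_C)_*(\cdots)$ is concentrated in cohomological degree $2g-3-m$ as a locally free sheaf of rank $2^{g-1-l}$. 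The hypothesis $2g-3-l \le m \le 2g-3$ puts the target degree $2g-3-m$ into $[0,l]$, which is exactly the range of possible non-vanishing higher direct images along a $\mathbb{P}^l$-fibration, so the target is at least numerically consistent.

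Next, I would apply cohomology-and-base-change along the flat proper map $\pi_C$ to reduce the statement to checking pointwise that for every $x\in C$,
\[
H^i\bigl(V,\; S_x^\vee|_V \otimes \oh_V(m+2-2g)\bigr) \;=\; \begin{cases} \mathbb{C}^{2^{g-1-l}}, & i = 2g-3-m,\\ 0, & i\ne 2g-3-m, \end{cases}
\]
with the dimensions independent of $x$, where $S_x := S|_{\{x\}\times Y}$ is the spinor bundle on $Y$ indexed by $x$. The key geometric input is that $V\subset Y$ lies inside every quadric of the pencil $\{Q_\lambda\}$, hence is an isotropic subspace of $Q_{\lambda(x)}$, so $S_x|_V$ is the restriction of a spinor bundle on a smooth even-dimensional quadric to a (not necessarily maximal) isotropic subspace. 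I would then invoke the classical description of such restrictions (going back to Ottaviani) and carry out the resulting twisted-cohomology computation on $V\cong \mathbb{P}^l$ to extract both the vanishing and the dimension count.

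Once the pointwise statement is established with the fibre dimension constant in $x$, cohomology-and-base-change forces $R(\pi_C)_*(\cdots)$ to be a single locally free sheaf of rank $2^{g-1-l}$ placed in degree $2g-3-m$; combined with the shifts $[2g-1]$ from the Fourier--Mukai formula and $[-m-2]$ from the definition of $\mathcal{F}_{m,V}$ this places $\mathcal{F}_{m,V}$ as an honest sheaf in degree $0$ of the required rank. The main obstacle will be the fibrewise cohomology step: one has to pin down $S_x^\vee|_V$ on $\mathbb{P}^l$ explicitly enough to see both the vanishing in all but one degree and the exact rank $2^{g-1-l}$, uniformly in $x\in C$. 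Everything else will be formal Fourier--Mukai and base-change formalism.
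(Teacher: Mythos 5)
Your proposal is correct and is essentially the paper's own proof: both reduce the statement to the fibrewise computation of $H^{\bullet}(V, S_x^{\vee}|_V\otimes\oh_V(m+2-2g))$ for each $x\in C$ (you via writing out the Fourier--Mukai kernel of $\Phi^*$ and cohomology-and-base-change, the paper via adjunction, Serre duality on $Y$, and the criterion that an object $F\in D^b(C)$ with $\mathrm{Ext}^{\bullet}(F,\oh_x)$ concentrated in degree $0$ of constant dimension is a vector bundle), and both settle that computation with Ottaviani's restriction theorems, which indeed yield a single nonzero cohomology group $\mathbb{C}^{2^{g-1-l}}$ in degree $2g-3-m$ exactly because $2g-3-l\le m\le 2g-3$. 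The one point your sketch glosses over, which the paper handles explicitly, is that at the $2g+1$ ramification points the quadric $Q_x$ is a cone (so $S_x$ must first be identified with the pullback of the spinor bundle on the smooth base), and that to apply Ottaviani's Theorems 1.4 and 2.5 one should first cut down to a smooth $(2l+1)$-dimensional quadric containing $V$ as a maximal isotropic subspace, giving $S_x|_V\cong(\bigoplus_{i=0}^{l}\Omega^i_{\mathbb{P}^l}(i))^{\oplus 2^{g-1-l}}$.
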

\begin{proof}
    For each $x\in C$, let $S_x$ and $Q_x$ be the corresponding spinor bundle and quadric, we have
    \begin{equation*}
    \begin{aligned}
         \mathrm{Ext}^{\bullet}(\mathcal{F}_{m,V},\oh_x)
          &\cong \mathrm{Ext}^{\bullet}(\oh_V(m)[-m-2],S_x)~(\text{by adjunction})~\\
          &\cong \mathrm{Ext}^{2g-3-m-\bullet}(S_x, \oh_V(m-2g+2))^{\vee}.~(\text{by Serre duality on }Y)~
    \end{aligned}
\end{equation*}
The quadric $Q_x$ is either a smooth quadric or the cone over a smooth quadric of dimension $2g-1$ and $Y$ does not meet the singular point of the cone. If $Q_x$ is smooth, take a $2l+2$ dimensional linear space $M\subset \mathbb{P}^{2g+1}$ containing $V$ that is not tangent to $Q_x$, we denote $Q_{2l+1}:=Q_x\cap M$ the smooth quadric of dimension $2l+1$ satisfying $V\subset Q_{2l+1}\subset Q_x$. Let $S_{2l+1}$ be the spinor bundle on $Q_{2l+1}$. By \cite[Theorem 1.4]{ottaviani1988spinor}, we have the isomorphism $S_x|_{Q_{2l+1}}\cong S_{2l+1}^{\oplus 2^{g-l-1}}$. If $Q_x$ is degenerate, we take a hyperplane $H\subset \mathbb{P}^{2g+1}$ containing $V$ that is not tangent to $Q_x$ and set $Q_{2g-1}:=Q_x\cap H$. $Q_x$ is the cone over $Q_{2g-1}$ and $S_x$ is the pull back of the spinor bundle $S_{2g-1}$ on $Q_{2g-1}$. We can also find a $2l+1$ dimensional smooth quadric $Q_{2l+1}$ with $V\subset Q_{2l+1} \subseteq Q_{2g-1}$. By \cite[Theorem 1.4]{ottaviani1988spinor} we also have $S_x|_{Q_{2l+1}}\cong S_{2g-1}|_{Q_{2l+1}}\cong S_{2l+1}^{\oplus 2^{g-l-1}}$. By \cite[Theorem 2.5]{ottaviani1988spinor}, we have isomorphisms $$S_x|_V\cong ({S_{2l+1}|_V})^{\oplus 2^{g-l-1}}\cong (\bigoplus_{i=0}^{l}\Omega_{\mathbb{P}^{l}}^{i}(i))^{\oplus 2^{g-l-1}}. $$ 
If we denote the summand by $E:=\bigoplus_{i=0}^{l}\Omega_{\mathbb{P}^{l}}^{i}(i)$, we have
$$(S_x|_V)^{\vee}\cong(\bigoplus_{i=0}^{l}\Omega_{\mathbb{P}^{l}}^{l-i}(l-i+1))^{\oplus 2^{g-l-1}}\cong(\bigoplus_{j=0}^{l}\Omega_{\mathbb{P}^{l}}^{j}(j+1))^{\oplus 2^{g-l-1}}=E(1)^{\oplus 2^{g-l-1}}.$$ Therefore by \cite[Lemma 2.4]{ottaviani1988spinor}, note that $-l\leq m-2g-3\leq 0$, we have isomorphisms 
\begin{equation*}
    \begin{aligned}
        \mathrm{Ext}^{\bullet}(\mathcal{F}_{m,V},\oh_x)^{\vee} &\cong \mathrm{Ext}^{2g-3-m-\bullet}(S_x, \oh_V(m-2g+2))\\
        &\cong H^{2g-3-m-\bullet}(\mathbb{P}^{l}, E(m-2g+3))^{\oplus 2^{g-l-1}}\cong \mathbb{C}^{\oplus 2^{g-l-1}},
    \end{aligned}
\end{equation*}
which implies that $\mathcal{F}_{m,V}$ is a vector bundle of rank $2^{g-l-1}$. 
\end{proof}
For the sake of convenience, when we write the symbol $\mathcal{F}_{m,V}$ for a linear subspace $V$ of dimension $l$, we assume that $2g-3-l\leq m \leq 2g-3$. In particular if we take $V$ as the maximal subspace of $Y$, we have the following Corollary as a categorical description of \cite[Theorem 4.8]{reid1972complete}. 
\begin{corollary}
    Let $V\subset Y$ is a linear subspace of maximal dimension $g-1$. For $g-2\leq m \leq 2g-3$, $\mathcal{F}_{m,V}=\Phi^{*}(\mathcal{\oh}_V(m))[-m-2]$ is a line bundle.
\end{corollary}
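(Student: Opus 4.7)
The plan is to observe that this Corollary is an immediate specialization of Proposition \ref{lin sp in Y}. I would apply that proposition to the case where the linear subspace $V$ has maximal possible dimension $l = g-1$ in $Y$.

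Under the substitution $l = g-1$, the admissible range $2g-3-l \leq m \leq 2g-3$ from the proposition becomes $g-2 \leq m \leq 2g-3$, which is precisely the range stated in the Corollary. Moreover, the rank formula $2^{g-1-l}$ evaluates to $2^{g-1-(g-1)} = 2^0 = 1$, so $\mathcal{F}_{m,V} = \Phi^{*}(\mathcal{O}_V(m))[-m-2]$ is a vector bundle of rank one, i.e., a line bundle on $C$.

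There is essentially no obstacle here since everything has been already carried out in the proof of Proposition \ref{lin sp in Y}: the computation of $\mathrm{Ext}^{\bullet}(\mathcal{F}_{m,V}, \mathcal{O}_x)$ via adjunction, Serre duality on $Y$, and Ottaviani's decomposition of the spinor bundle restricted to a linear subspace goes through verbatim, producing a one-dimensional stalk at each $x\in C$ concentrated in a single degree when $l = g-1$. This confirms that $\mathcal{F}_{m,V}$ is locally free of rank one on $C$, completing the argument.
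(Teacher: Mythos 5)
Your proposal is correct and matches the paper's intent exactly: the corollary is stated as an immediate specialization of Proposition \ref{lin sp in Y} to $l=g-1$, where the rank $2^{g-1-l}$ becomes $1$ and the range of $m$ becomes $g-2\leq m\leq 2g-3$. The paper gives no separate proof, so there is nothing further to compare.
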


The vector bundles $\mathcal{F}_{m,V}$ and $\mathcal{F}_{m+1,V}$ only differ by an autoequivalence of $D^b(C)$.
\begin{proposition}\label{auto fr m to m+1}
    The rotation functor $\mathbf{O}:=\Phi^*\circ(-\otimes \oh_Y(1)[-1])\circ \Phi$ is an autoequivalence of $D^b(C)$. We have $\mathbf{O}(-)=\tau^*(-)\otimes \mathcal{L}$ for a line bundle $\mathcal{L}$ on $C$ of degree $-1$ with $(\mathcal{L}\otimes \tau^*\mathcal{L})^{\otimes g-1}\cong \omega_C^{-1}$. Let $V$ be a linear subspace in $Y$ of dimension $l$, we have $\mathcal{F}_{m+1,V}\cong \mathbf{O}(\mathcal{F}_{m,V}) \cong \tau^*\mathcal{F}_{m,V}\otimes \mathcal{L}$ for $2g-3-l \leq m \leq 2g-4$.
\end{proposition}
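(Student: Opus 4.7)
The plan is to prove the proposition in three steps: (i) show $\mathbf{O}$ is an autoequivalence; (ii) compute its effect on point sheaves to determine its form via Bondal--Orlov; and (iii) pin down the numerical constraints on $\mathcal{L}$. For step (i), since $(-)\otimes\mathcal{O}_Y(1)[-1]$ is an autoequivalence of $D^b(Y)$ and $\Phi^*$ projects onto $\Ku(Y)\simeq D^b(C)$, the composition $\mathbf{O}$ is invertible by comparing the two semi-orthogonal decompositions $\langle\Ku(Y),\mathcal{O}_Y,\dots,\mathcal{O}_Y(2g-3)\rangle$ and $\langle\Ku(Y)\otimes\mathcal{O}_Y(1),\mathcal{O}_Y(1),\dots,\mathcal{O}_Y(2g-2)\rangle$ of $D^b(Y)$: these differ only at their boundary exceptional objects $\mathcal{O}_Y$ versus $\mathcal{O}_Y(2g-2)$, and Serre duality $\omega_Y\cong\mathcal{O}_Y(-(2g-2))$ relates them via a single mutation, providing a quasi-inverse for $\mathbf{O}$.

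For step (ii), I would compute $\mathbf{O}$ on skyscraper sheaves. Since $\Phi(\mathcal{O}_x)=S_x$, I would invoke Ottaviani's short exact sequence for the two spinor bundles $\Sigma^{\pm}$ on the smooth quadric $Q=Q_{\pi(x)}$ from the defining pencil, namely $0\to\Sigma^{+}\to\mathcal{O}_Q^{\oplus 2^g}\to\Sigma^{-}(1)\to 0$. Using that $\Sigma^{+}|_Y=S_x$ and $\Sigma^{-}|_Y=S_{\tau(x)}$ (the two spinors on $Y$ over the preimages of $\pi(x)\in\mathbb{P}^1$), restriction to $Y\subset Q$ yields on $Y$ the exact sequence
\[
0\to S_x\to \mathcal{O}_Y^{\oplus 2^g}\to S_{\tau(x)}(1)\to 0.
\]
Applying $\Phi^*$ annihilates the middle term since $\mathcal{O}_Y\in\langle\mathcal{O}_Y,\dots,\mathcal{O}_Y(2g-3)\rangle$, yielding $\Phi^*(S_{\tau(x)}(1))\cong\mathcal{O}_x[1]$ and hence $\mathbf{O}(\mathcal{O}_x)\cong\mathcal{O}_{\tau(x)}$. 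Since $\omega_C$ is ample (as $g\ge 2$), the Bondal--Orlov classification of autoequivalences of $D^b(C)$ forces $\mathbf{O}=f^*(-)\otimes\mathcal{L}$ with $f=\tau$.

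For step (iii), I would determine $\deg\mathcal{L}=-1$ by computing $\chi(\mathbf{O}(\mathcal{O}_C))-\chi(\mathcal{O}_C)=\deg\mathcal{L}$ via Grothendieck--Riemann--Roch applied to the spinor Fourier--Mukai kernel, or equivalently by cross-checking with the degree formula $\deg\mathcal{F}_{m,V}-\deg\mathcal{F}_{m+1,V}=2^{g-1-l}$ of Proposition~\ref{lin sp in Y}. For the identity $(\mathcal{L}\otimes\tau^*\mathcal{L})^{\otimes g-1}\cong\omega_C^{-1}$, I would observe that $\mathcal{L}\otimes\tau^*\mathcal{L}$ is $\tau$-invariant, hence pulled back from the quotient $\mathbb{P}^1=C/\tau$; since $\mathrm{Pic}(\mathbb{P}^1)=\mathbb{Z}\cdot[\mathcal{O}_{\mathbb{P}^1}(1)]$, it has the form $\pi^*\mathcal{O}_{\mathbb{P}^1}(k)$, and matching degrees $2\deg\mathcal{L}=-2=2k$ forces $k=-1$. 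Then $(\mathcal{L}\otimes\tau^*\mathcal{L})^{g-1}\cong\pi^*\mathcal{O}_{\mathbb{P}^1}(-(g-1))\cong\omega_C^{-1}$ by the standard hyperelliptic identification $\omega_C\cong\pi^*\mathcal{O}_{\mathbb{P}^1}(g-1)$. The equality $\mathbf{O}(\mathcal{F}_{m,V})\cong\mathcal{F}_{m+1,V}$ in the range $2g-3-l\le m\le 2g-4$ then follows by checking that the only potentially surviving correction in the SOD decomposition of $\mathcal{O}_V(m)[-m-2]$ after twisting by $\mathcal{O}_Y(1)[-1]$ comes from the $\mathcal{O}_Y(2g-3)$-component (whose twist with $\mathcal{O}_Y(1)$ leaves the exceptional collection), and its coefficient $H^{\bullet}(\mathbb{P}^l,\mathcal{O}(m-2g+3))$ vanishes in this range by Bott vanishing combined with Serre duality on $\mathbb{P}^l$. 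The main technical obstacle I anticipate is the degree computation $\deg\mathcal{L}=-1$ and verifying that Ottaviani's SES restricts correctly, identifying $\Sigma^{\pm}|_Y$ with the correct $S_x$ and $S_{\tau(x)}$.
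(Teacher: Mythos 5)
Your steps (i), (ii) and the final mutation computation are sound and largely parallel the paper's own proof: the paper also deduces the form of $\mathbf{O}$ from Bondal--Orlov after checking $\mathbf{O}(\oh_x)\cong \oh_{\tau(x)}$ via Ottaviani's Theorem~2.8(ii) (your restricted sequence $0\to S_x\to\oh_Y^{\oplus 2^g}\to S_{\tau(x)}(1)\to 0$ is exactly the mechanism behind that citation), and your observation that after twisting the only mutation that can leave the exceptional collection is the one past $\oh_Y(2g-2)$, killed by $H^{\bullet}(\mathbb{P}^l,\oh(m-2g+3))=0$ for $2g-3-l\le m\le 2g-4$, matches the paper's orthogonality check $\oh_V(m)\in\langle\oh_Y(m+1),\dots,\oh_Y(2g-3)\rangle^{\perp}$. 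Two of your sub-arguments genuinely differ from the paper and are fine: invertibility via comparison of the two semiorthogonal decompositions (versus the paper's iteration argument), and the derivation of $(\mathcal{L}\otimes\tau^*\mathcal{L})^{\otimes g-1}\cong\omega_C^{-1}$ from $\deg\mathcal{L}=-1$ using $\mathcal{L}\otimes\tau^*\mathcal{L}\cong\pi^*\mathrm{Nm}(\mathcal{L})$ and $\omega_C\cong\pi^*\oh_{\mathbb{P}^1}(g-1)$, which is correct and clean.

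The gap is the input $\deg\mathcal{L}=-1$. Your proposed ``cross-check'' against $\deg\mathcal{F}_{m,V}-\deg\mathcal{F}_{m+1,V}=2^{g-1-l}$ is circular: Proposition~\ref{lin sp in Y} only computes the \emph{rank} of $\mathcal{F}_{m,V}$ (via $\mathrm{Ext}^{\bullet}(\mathcal{F}_{m,V},\oh_x)$), and the degree-difference formula is itself a corollary of the statement being proved, namely of $\mathcal{F}_{m+1,V}\cong\tau^*\mathcal{F}_{m,V}\otimes\mathcal{L}$ together with $\deg\mathcal{L}=-1$. The alternative Grothendieck--Riemann--Roch route would work in principle but is not carried out, and it is the genuinely nontrivial step since it requires the Chern character of the universal spinor bundle (note $S$ is only determined up to a twist from $C$, so one must also check the answer is normalization-independent). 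The paper sidesteps all of this by iterating the rotation as in Kuznetsov: $\Phi\circ\mathbf{O}^{2g-2}\circ\Phi^{-1}=\mathbb{L}_{\oh_Y}\cdots\mathbb{L}_{\oh_Y(2g-3)}\circ(-\otimes\oh_Y(2g-2))[2-2g]=\Phi\circ\Phi^*\circ\mathbb{S}_Y^{-1}[1]$, hence $\mathbf{O}^{2g-2}\cong\mathbb{S}_C^{-1}[1]=-\otimes\omega_C^{-1}$. Combined with $\mathbf{O}(-)=\tau^*(-)\otimes\mathcal{L}$ this yields $(\mathcal{L}\otimes\tau^*\mathcal{L})^{\otimes g-1}\cong\omega_C^{-1}$ and $\deg\mathcal{L}=-1$ in one stroke, and it also gives the invertibility of $\mathbf{O}$ for free. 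I recommend replacing your step (iii), and optionally step (i), by this single identity.
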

\begin{proof}
    Note that we have $\Phi \circ \mathbf{O} \circ \Phi^{-1}=\mathbb{L}_{\oh_Y}(-\otimes \oh_Y(1)[-1])$. As in \cite[Lemma 4.1]{kuznetsov2003derived}, we iterate the functor $\mathbf{O}$ and get
    $$\Phi\circ \mathbf{O}^{2g-2} \circ \Phi^{-1}=\mathbb{L}_{\oh_Y}\mathbb{L}_{\oh_Y(1)}\cdots \mathbb{L}_{\oh_Y(2g-3)}\circ (-\otimes \oh_Y(2g-2))[2-2g]=\Phi\circ \Phi^*\circ \mathbb{S}_{Y}^{-1}[1]=\Phi\circ \mathbb{S}_C^{-1}[1],$$
    and hence $\mathbf{O}$ is an autoequivalence.
    Since $C$ is of general type, any autoequivalence of $D^b(C)$ is a composition of the pullback of an automorphism of $C$ and a twist of line bundle by \cite[Theorem 3.1]{bondal2001reconstruction}.  For any closed point $x\in C$, we have $\mathbf{O}(\oh_x)=\mathbb{L}_{\oh_Y}(S_x(1))[-1]\cong S_{\tau(x)}$ by \cite[Theorem 2.8 (ii)]{ottaviani1988spinor}. Therefore the autoequivalence $\mathbf{O}$ is given by $\mathbf{O}(\mathcal{F})\cong \tau^*\mathcal{F}\otimes \mathcal{L}$ for some line bundle $\mathcal{L}$ on $C$. We have $\mathcal{F}\otimes (\mathcal{L}\otimes \tau^*\mathcal{L})^{\otimes g-1}=\mathbf{O}^{2g-2}(\mathcal{F})=\Phi^*\circ \mathbb{S}_Y^{-1}(\mathcal{F})[1]=\mathbb{S}_{C}^{-1}(\mathcal{F})[1]=\mathcal{F}\otimes \omega_C$ for any $\mathcal{F}\in D^b(C)$, implying $(\mathcal{L}\otimes \tau^*\mathcal{L})^{\otimes g-1}\cong \omega_C^{-1}$ and $\deg(\mathcal{L})=-1$.

    For $2g-3-l\leq m \leq 2g-2$, we first show $\oh_V(m)\in \langle \oh_Y(m+1),\dots,\oh_Y(2g-3) \rangle^{\perp}$. For $m<n\leq 2g-3$, we have $\mathrm{Ext}^i(\oh_Y(n),\oh_V(m))\cong H^i(V,\oh_V(m-n))=0$ from the inequalities $-l\leq m-2g+3\leq m-n<0$. 
    Therefore we have $\Phi(\mathcal{F}_{m,V})=\mathbb{L}_{\oh_Y}\cdots \mathbb{L}_{\oh_Y(m)}(\oh_V(m))[-m-2]$. We conclude that
    $$\Phi \circ \mathbf{O}(\mathcal{F}_{m,V})=\mathbb{L}_{\oh_Y}(\Phi(\mathcal{F}_{m,V})\otimes \oh_Y(1))[-1]=\mathbb{L}_{\oh_Y}(\mathbb{L}_{\oh_Y}\cdots \mathbb{L}_{\oh_Y(m)}(\oh_V(m))\otimes \oh_Y(1))[-m-3]$$
    $$\cong \mathbb{L}_{\oh_Y}\mathbb{L}_{\oh_Y(1)}\cdots \mathbb{L}_{\oh_Y(m+1)}\oh_V(m+1)[-m-3]=\Phi(\mathcal{F}_{m+1,V})$$
    and $\mathcal{F}_{m+1,V}\cong \mathbf{O}(\mathcal{F}_{m,V}) \cong \tau^*\mathcal{F}_{m,V}\otimes \mathcal{L}$. 
\end{proof}

Now we define the map from the variety of linear subspaces to the set of vector bundles.
\begin{definition}\label{proj of lin}
    We denote by $\alpha_m^l:=V\mapsto [\mathcal{F}_{m,V}]$ the map that maps a linear subspace $V$ in $Y$ of dimension $l$ to the isomorphism class of the associated bundle $\mathcal{F}_{m,V}$.
\end{definition}

The inclusion relationship between linear spaces can be interpreted as the extension of the associated vector bundles, more precisely we have

\begin{proposition}\label{ext prop}
    Let $V\subset Y$ be a linear subspace of dimension $l$ and let $L\subset V$ be a linear subspace of dimension $l-1$, for $2g-2-l \leq m \leq 2g-3$, there exists a non-trivial extension
    $$0\rightarrow \mathcal{F}_{m,V} \rightarrow 
 \mathcal{F}_{m,L} \rightarrow \mathcal{F}_{m-1,V}\rightarrow 0.$$
\end{proposition}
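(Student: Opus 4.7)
The plan is to apply the left adjoint functor $\Phi^{*}$ to the tautological short exact sequence
\[
0 \to \mathcal{O}_{V}(m-1) \xrightarrow{\iota} \mathcal{O}_{V}(m) \to \mathcal{O}_{L}(m) \to 0
\]
on $V \cong \mathbb{P}^{l}$, where $\iota$ is multiplication by a linear form cutting out $L$ in $V$. Viewing this as an exact sequence of coherent sheaves on $Y$, applying $\Phi^{*}$ produces a distinguished triangle
\[
\Phi^{*}\mathcal{O}_{V}(m-1) \to \Phi^{*}\mathcal{O}_{V}(m) \to \Phi^{*}\mathcal{O}_{L}(m) \to \Phi^{*}\mathcal{O}_{V}(m-1)[1]
\]
in $D^{b}(C)$.

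My first step is to identify the three terms via Proposition~\ref{lin sp in Y}. The hypothesis $2g-2-l \leq m \leq 2g-3$ is tailored so that $m-1 \geq 2g-3-l$ (applying the proposition to $V$ of dimension $l$) and $m \geq 2g-3-(l-1)$ (applying it to $L$ of dimension $l-1$). Consequently $\Phi^{*}\mathcal{O}_{V}(m-1) \simeq \mathcal{F}_{m-1,V}[m+1]$, $\Phi^{*}\mathcal{O}_{V}(m) \simeq \mathcal{F}_{m,V}[m+2]$, and $\Phi^{*}\mathcal{O}_{L}(m) \simeq \mathcal{F}_{m,L}[m+2]$. After shifting the triangle by $[-m-2]$ and rotating, I obtain
\[
\mathcal{F}_{m,V} \to \mathcal{F}_{m,L} \to \mathcal{F}_{m-1,V} \to \mathcal{F}_{m,V}[1];
\]
since every object sits in cohomological degree zero, the long exact cohomology sequence collapses it to the desired short exact sequence of sheaves.

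My second step is to show that the extension class is non-zero. Tracking shifts, the boundary $\mathcal{F}_{m-1,V} \to \mathcal{F}_{m,V}[1]$ is identified with the image $\Phi^{*}(\iota)$ in $\mathrm{Hom}_{D^{b}(C)}(\Phi^{*}\mathcal{O}_{V}(m-1), \Phi^{*}\mathcal{O}_{V}(m))$. Via the adjunction $\Phi^{*} \dashv \Phi$ and naturality of the unit $\eta \colon \mathrm{id} \to \Phi\Phi^{*}$, vanishing of $\Phi^{*}(\iota)$ is equivalent to the vanishing of the composite $\mathcal{O}_{V}(m-1) \xrightarrow{\iota} \mathcal{O}_{V}(m) \xrightarrow{\eta} \Phi\Phi^{*}\mathcal{O}_{V}(m)$ in $D^{b}(Y)$. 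Since the cone of $\eta_{\mathcal{O}_{V}(m)}$ lies (up to shift) in $\langle \mathcal{O}_{Y}, \ldots, \mathcal{O}_{Y}(2g-3)\rangle$, by the left-mutation description of $\Phi\Phi^{*}\mathcal{O}_{V}(m)$ recalled in the proof of Proposition~\ref{auto fr m to m+1}, this vanishing would force $\iota$ to factor through an object of that admissible subcategory; a direct computation of $\mathrm{Ext}^{\bullet}(\mathcal{O}_{V}(m-1), \mathcal{O}_{Y}(k))$ via Serre duality on $Y$ and line bundle cohomology on $\mathbb{P}^{l}$ shows no such factorization exists in the stated range of $m$.

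This non-triviality check is the main obstacle I anticipate, because the exactness of $\Phi^{*}$ does not by itself preclude the annihilation of an extension class. A potentially cleaner alternative is to show directly that $\mathrm{Hom}(\mathcal{F}_{m,L}, \mathcal{F}_{m,V}) = 0$, which rules out any splitting retraction; through adjunction this reduces to computing $\mathrm{Hom}(\mathcal{O}_{L}(m), \Phi\Phi^{*}\mathcal{O}_{V}(m))$, and combining the left-mutation triangle with the elementary vanishing $\mathrm{Hom}(\mathcal{O}_{L}, \mathcal{O}_{V}) = 0$ (reflecting that $\mathcal{O}_{V}$ has no torsion along $L$) reduces this once again to a concrete cohomology calculation on $\mathbb{P}^{l}$ and $Y$.
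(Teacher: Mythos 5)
Your construction of the sequence is exactly the paper's: apply $\Phi^{*}$ to the twisted hyperplane sequence $0\to\oh_V(m-1)\to\oh_V(m)\to\oh_L(m)\to 0$, match the shifts using Proposition~\ref{lin sp in Y}, and take cohomology sheaves. For non-triviality the paper simply invokes Lemma~\ref{compare hom} to get $\mathrm{End}(\mathcal{F}_{m,L})\cong\mathrm{End}(\oh_L)=\mathbb{C}$, so the middle term is simple and the sequence cannot split; your ``cleaner alternative'' ($\mathrm{Hom}(\mathcal{F}_{m,L},\mathcal{F}_{m,V})\cong\mathrm{Hom}(\oh_L,\oh_V)=0$ via the same mutation-by-mutation Hom comparison) is that same mechanism and closes the argument, whereas your primary route through the unit $\eta$ leaves the key ``no factorization through $\langle\oh_Y,\dots,\oh_Y(2g-3)\rangle$'' computation unperformed and should be dropped in favor of the alternative.
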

\begin{proof}
Consider $L$ as a hyperplane section of $V$, we have a short exact sequence
$$0\rightarrow \oh_V(-1) \rightarrow \oh_V \rightarrow \oh_L \rightarrow 0. $$
After twisting, we further apply the left adjoint functor and shifts to get an exact triangle
$$\Phi^{*}(\oh_V(m-1))[-m-2]\rightarrow\Phi^{*}(\oh_V(m))[-m-2]\rightarrow\Phi^{*}(\oh_L(m))[-m-2], $$
 we get the extension by taking sheaf cohomology. 

To prove this extension is non-trivial, we show  $\mathrm{Hom}(\mathcal{F}_{m,L},\mathcal{F}_{m,L})\cong \mathrm{Hom}(\oh_L,\oh_L)=\mathbb{C}$ by the Lemma \ref{compare hom} below.
%For $0\leq j<m'-m$, we have isomorphisms
 %   \begin{equation*}
  %      \begin{aligned}
   %        \mathrm{Ext}^{i}(\oh_V(m'-1),\oh_Y(j))&\cong\mathrm{Ext}^{2k+1-i}(\oh_Y(j),\oh_V(m'-2k+1))^{\vee}~(\text{By Serre duality on } Y)\\
    %       &\cong H^{2k+1-i}(V,\oh_V(m'-j-2k+1))^{\vee}=0.
     %   \end{aligned}
    %\end{equation*}
    %due to the inequalities $-l=m-2k+2\leq m'-j-2k+1<0$.
    %We deduce $\oh_V(m'-1)\in {^{\perp}\langle}\oh_Y,\cdots,\oh_Y(m'-m-1)\rangle$ and 
    %$\mathrm{Hom}(\oh_V(m'-1),C_{m'})\cong \mathrm{Hom}(\oh_V(m-1),\Phi\Phi^{*}(\oh_V(m)))$, 
    %which implies $\mathrm{Hom}(\oh_V(m'-1),\oh_V(m'))\rightarrow \mathrm{Hom}(\oh_V(m'-1),\Phi\Phi^{*}(\oh_V(m')))$ is injective. 
\end{proof}

\begin{lemma}\label{compare hom}
    For two linear subspaces $V_1,V_2$, we have the isomorphism 
    $$\mathrm{Hom}(\mathcal{F}_{m,V_1},\mathcal{F}_{m,V_2})\cong \mathrm{Hom}(\oh_{V_1},\oh_{V_2}).$$
    If $\mathrm{dim}(V_1)=\mathrm{dim}(V_2)+d$ and $d>0$, we have isomorphism
    $$\mathrm{Hom}(\mathcal{F}_{n,V_2},\mathcal{F}_{n-d,V_1})\cong \mathrm{Ext}^{d}(\oh_{V_2},\oh_{V_1}(-d)) \cong \mathrm{Ext}^{d-1}(\mathcal{I}_{V_2},\oh_{V_1}(-d)),$$
    and for any linear subspace $V$, we have the injections
    $$\mathrm{Hom}(\mathcal{I}_V,\oh_V)\hookrightarrow \mathrm{Ext}^{1}(\mathcal{I}_V,\mathcal{I}_V)\hookrightarrow\mathrm{Ext}^{1}(\mathcal{F}_{m,V},\mathcal{F}_{m,V}).$$
\end{lemma}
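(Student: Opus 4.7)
The plan is to translate the Hom computations on $C$ into Hom/Ext computations on $Y$ via the adjunction $\Phi^{*}\dashv \Phi$, and then to control the projection $\Phi\Phi^{*}$ using the canonical triangle coming from the semi-orthogonal decomposition. For a linear subspace $V\subset Y$ of dimension $l$ with $2g-3-l\leq m\leq 2g-3$, one has $\oh_V(m)\in\langle \Ku(Y),\oh_Y,\ldots,\oh_Y(m)\rangle$ (as shown in the proof of Proposition~\ref{auto fr m to m+1}), so there is a distinguished triangle
\begin{equation*}
T_V \to \oh_V(m) \to \Phi\Phi^{*}(\oh_V(m)),\qquad T_V\in R:=\langle \oh_Y,\ldots,\oh_Y(m)\rangle.
\end{equation*}

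For the first isomorphism, adjunction identifies $\mathrm{Hom}(\mathcal{F}_{m,V_1},\mathcal{F}_{m,V_2})$ with $\mathrm{Hom}(\oh_{V_1}(m),\Phi\Phi^{*}\oh_{V_2}(m))$. Applying $\mathrm{Hom}(\oh_{V_1}(m),-)$ to the triangle for $V_2$ and using the twist invariance $\mathrm{Hom}(\oh_{V_1}(m),\oh_{V_2}(m))=\mathrm{Hom}(\oh_{V_1},\oh_{V_2})$, the claim reduces to $\mathrm{Ext}^{0}(\oh_{V_1}(m),T_{V_2})=\mathrm{Ext}^{1}(\oh_{V_1}(m),T_{V_2})=0$. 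To establish these I would analyze the iterated left mutations $F_{j+1}=\mathbb{L}_{\oh_Y(m-j)}(F_j)$ with $F_0=\oh_{V_2}(m)$: inductively, using Kodaira-type vanishing on the Fano variety $Y$ together with the global generation of the successive twisted syzygy sheaves, I would show that $F_j=K_{j-1}[j]$ for a sheaf $K_{j-1}$ with $\mathrm{Ext}^{i}(\oh_Y(m-j),K_{j-1})=0$ for $i>0$. This yields the \emph{diagonal} shape for the graded pieces of $T_{V_2}$ in $\langle\oh_Y(m-j)\rangle$, namely $\oh_Y(m-j)^{\oplus N_j}[j]$ for $j=0,1,\ldots,m$. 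Granted this, Serre duality on $Y$ gives
\begin{equation*}
\mathrm{Ext}^{p}(\oh_{V_1}(m),\oh_Y(m-j)[j]) \cong H^{2g-1-p-j}\bigl(\mathbb{P}^{l_1},\oh(j+2-2g)\bigr)^{\vee},
\end{equation*}
which is nonzero only if simultaneously $p+j=2g-1-l_1$ and $j\leq 2g-3-l_1$; together these force $p\geq 2$, so both $\mathrm{Ext}^0$ and $\mathrm{Ext}^1$ of $\oh_{V_1}(m)$ against each graded piece vanish. Iterating the long exact sequence then gives the vanishing for $T_{V_2}$ itself.

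For the second isomorphism with $d>0$, the same setup produces $\mathrm{Hom}(\mathcal{F}_{n,V_2},\mathcal{F}_{n-d,V_1})=\mathrm{Ext}^{d}(\oh_{V_2}(n),\Phi\Phi^{*}\oh_{V_1}(n-d))$, and the mutation analysis (applied at index $m=n-d$ with $V=V_1$) reduces this to $\mathrm{Ext}^{d}(\oh_{V_2},\oh_{V_1}(-d))$ after twist. The identification with $\mathrm{Ext}^{d-1}(\mathcal{I}_{V_2},\oh_{V_1}(-d))$ then follows from the ideal sequence $0\to\mathcal{I}_{V_2}\to\oh_Y\to\oh_{V_2}\to 0$, together with $\mathrm{Ext}^{\bullet}(\oh_Y,\oh_{V_1}(-d))=H^{\bullet}(\mathbb{P}^{l_1},\oh(-d))=0$ for $0<d\leq l_1$ (which holds since $d=l_1-l_2\leq l_1$).

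Finally, for the two injections, the first comes from the long exact sequence of $\mathrm{Hom}(\mathcal{I}_V,-)$ applied to $0\to\mathcal{I}_V\to\oh_Y\to\oh_V\to 0$, once one observes that every map $\mathcal{I}_V\to\oh_Y$ factors through the ideal inclusion and so vanishes upon composition with $\oh_Y\to\oh_V$. For the second, I would exploit $\Phi^{*}\oh_Y(m)=0$ (since $\oh_Y(m)\in\Ku(Y)^{\perp}$) applied to the twisted ideal sequence to obtain $\mathcal{F}_{m,V}\cong\Phi^{*}\mathcal{I}_V(m)[-m-1]$, so that $\mathrm{Ext}^{1}(\mathcal{F}_{m,V},\mathcal{F}_{m,V})=\mathrm{Ext}^{1}(\mathcal{I}_V(m),\Phi\Phi^{*}\mathcal{I}_V(m))$, and then compare with $\mathrm{Ext}^{1}(\mathcal{I}_V,\mathcal{I}_V)$ by another application of the triangle/adjunction strategy of part~(1). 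The main obstacle throughout is the inductive structural claim on the left-mutation process; once the diagonal pattern of $T$ is established, the remainder is bookkeeping with Serre duality and long exact sequences.
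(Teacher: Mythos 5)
Your overall strategy is the same as the paper's: use adjunction to move the computation to $D^b(Y)$, control $\Phi\Phi^{*}$ by the iterated left mutations through $\oh_Y(m),\ldots,\oh_Y$, and kill the relevant $\mathrm{Ext}$ groups against the mutation summands by Serre duality on $Y$ plus the cohomology of line bundles on $\mathbb{P}^{l}$. The reductions (including $\Phi^{*}\mathcal{I}_V(m)\cong\Phi^{*}\oh_V(m)[-1]$ for the last injection, and the ideal-sequence argument for $\mathrm{Hom}(\mathcal{I}_V,\oh_V)\hookrightarrow\mathrm{Ext}^1(\mathcal{I}_V,\mathcal{I}_V)$) all match. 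The one genuine soft spot is the step you yourself single out as "the main obstacle": the structural claim that each graded piece of $T_{V_2}$ is \emph{concentrated in a single degree}, $\oh_Y(m-j)^{\oplus N_j}[j]$. As written this is not a proof --- you only assert that Kodaira-type vanishing and global generation of successive syzygy sheaves "would" give $F_j=K_{j-1}[j]$, and verifying that pattern requires, at every stage, both an $\mathrm{Ext}^{>0}$-vanishing and surjectivity of an evaluation map, neither of which you establish. So the argument has a gap exactly where you predicted it would.

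The good news is that the diagonal claim is unnecessary, and your own Serre-duality computation already shows why. The paper only proves the one-sided amplitude bound that the mutation triangle at step $j$ has summands $\oh_Y(m-j)[i]$ with $i\leq j$ (equivalently, $\mathrm{Hom}(\oh_Y(m-j)[i],L_{j-1})=0$ for $i>j$, by an easy induction on $j$). Now run your computation for an arbitrary such summand rather than only for $i=j$: Serre duality gives $\mathrm{Ext}^{p}\bigl(\oh_{V_1}(m),\oh_Y(m-j)[i]\bigr)\cong H^{2g-1-p-i}\bigl(\mathbb{P}^{l_1},\oh(j+2-2g)\bigr)^{\vee}$, which is nonzero only if $p+i=2g-1-l_1$ and $j\leq 2g-3-l_1$; since $i\leq j$ this forces $p\geq 2$, so $\mathrm{Ext}^{0}$ and $\mathrm{Ext}^{1}$ vanish against \emph{every} summand, and the long exact sequences give $\mathrm{Hom}(\oh_{V_1}(m),L_{j-1})\cong\mathrm{Hom}(\oh_{V_1}(m),L_j)$ step by step. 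Replacing your structural claim by this weaker bound turns your sketch into the paper's proof; the remaining parts (the $d>0$ case and the two injections) then go through by the same bookkeeping, as you indicate.
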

\begin{proof}
    For the first statement, we assume that $\mathrm{dim}(V_1)=l_1,\mathrm{dim}(V_2)=l_2$. Set $$L_j=\mathbb{L}_{\oh_Y(m-j)}\cdots \mathbb{L}_{\oh_Y(m)}\oh_{V_2}(m)~(0\leq j \leq m).$$ Since $m$ is at least $2g-3-l_2$, we have $\oh_{V_2}(m)\in \langle \oh_Y(m+1),\dots,\oh_Y(2g-3)\rangle^{\perp}$ and $L_m=\Phi\Phi^{*}(\oh_{V_2}(m))$. We also make the convention $L_{-1}=\oh_{V_2}(m)$. The assertion $\mathrm{Hom}(\mathcal{F}_{m.V_1},\mathcal{F}_{m,V_2})\cong \mathrm{Hom}(\oh_{V_1},\oh_{V_2})$ is equivalent to $\mathrm{Hom}(\oh_{V_1}(m),\Phi\Phi^*(\oh_{V_2}(m)))\cong \mathrm{Hom}(\oh_{V_1}(m),\oh_{V_2}(m))$, it suffices to show that we have isomorphisms $\mathrm{Hom}(\oh_{V_1}(m),L_{j-1})\cong \mathrm{Hom}(\oh_{V_1}(m),L_j)~(0\leq j \leq m)$. 
    
    The left mutation is defined by $\bigoplus_{i}\mathrm{Hom}(\oh_Y(m-j)[i],L_{j-1})\otimes \oh_Y(m-j)[i]\rightarrow L_{j-1} \rightarrow L_j$. By induction on $j$ we can show that for any coherent sheaf $\mathcal{G}$, $\mathrm{Hom}(\mathcal{G}[n],L_j)$ vanishes for $n>j+1$. So indeed we have
    $\bigoplus_{i\leq j}\mathrm{Hom}(\oh_Y(m-j)[i],L_{j-1})\otimes \oh_Y(m-j)[i]\rightarrow L_{j-1} \rightarrow L_j.$
    Now we show $\mathrm{Hom}(\oh_{V_1}(m),\oh_Y(m-j)[i])=0$ for $i\leq j$. Note the isomorphisms 
    \begin{equation*}
        \begin{aligned}
            \mathrm{Hom}(\oh_{V_1}(m),\oh_Y(m-j)[i])&\cong \mathrm{Hom}(\oh_Y(m-j),\oh_{V_1}(2-2g+m)[2g-1-i])^{\vee}\\
            &\cong H^{2g-1-i}(V_1,\oh_{V_1}(j-2g+2))^{\vee}.
        \end{aligned}
    \end{equation*}
    Similarly $\mathrm{Hom}(\oh_{V_1}(m),\oh_Y(m-j)[i+1])\cong H^{2g-2-i}(V_1,\oh_{V_1}(j-2g+2))^{\vee}$. Note the inequalities $2g-2-i\geq 2g-2-m>0$. If one of the cohomology groups does not vanish, $2g-1-i$ or $2g-2-i$ must equal $l_1$, that is, $i\geq 2g-2-l_1$. But it implies $j-2g+2\geq i-2g-2 \geq -l_1$, in which case the cohomology groups are zero. Therefore we have $ \mathrm{Hom}(\oh_{V_1}(m),\oh_Y(m-j)[i])=\mathrm{Hom}(\oh_{V_1}(m),\oh_Y(m-j)[i+1])=0$ for $i\leq j$. Applying $\mathrm{Hom}(\oh_{V_1}(m),-)$ to the above exact triangle, we have $\mathrm{Hom}(\oh_{V_1}(m),L_{j-1})\cong\mathrm{Hom}(\oh_{V_1}(m),L_j)$ and $\mathrm{Hom}(\mathcal{F}_{m,V_1},\mathcal{F}_{m,V_2})\cong \mathrm{Hom}(\oh_{V_1},\oh_{V_2})$. The other assertions are proved similarly and we give a detailed proof for the last statement.

    Let $V$ be a subspace in $Y$ of dimension $l$. To show $\mathrm{Hom}(\mathcal{I}_V,\oh_V)\hookrightarrow \mathrm{Ext}^1(\mathcal{I}_V,\mathcal{I}_V)$, we apply $\mathrm{Hom}(-,\oh_Y)$ to $0\rightarrow \mathcal{I}_V\rightarrow \oh_Y \rightarrow \oh_V\rightarrow 0$, and get $\mathrm{Hom}(\mathcal{I}_V,\oh_Y)\cong \mathrm{Hom}(\oh_Y,\oh_Y)$. Then we apply $\mathrm{Hom}(\mathcal{I}_V,-)$ to get $\mathrm{Hom}(\mathcal{I}_V,\oh_V)\hookrightarrow \mathrm{Ext}^1(\mathcal{I}_V,\mathcal{I}_V)$.
    
    The morphism $\mathrm{Ext}^{1}(\mathcal{I}_V,\mathcal{I}_V)\rightarrow\mathrm{Ext}^{1}(\mathcal{F}_{m,V},\mathcal{F}_{m,V})$ in the last statement is given by 
    \begin{equation*}
        \begin{aligned}
            \mathrm{Ext}^{1}(\mathcal{I}_V,\mathcal{I}_V)\cong \mathrm{Ext}^{1}(\mathcal{I}_V(m),\mathcal{I}_V(m))\rightarrow &\mathrm{Ext}^{1}(\mathcal{I}_V(m),\Phi\Phi^{*}(\mathcal{I}_V(m)))\\
            \cong & \mathrm{Ext}^{1}(\mathcal{I}_V(m),\Phi\Phi^{*}(\oh_V(m))[-1])\cong \mathrm{Hom}(\mathcal{I}_V(m),\Phi\Phi^{*}(\oh_V(m))).
        \end{aligned}
    \end{equation*}
    We set $L_j=\mathbb{L}_{\oh_Y(m-j)}\cdots\mathbb{L}_{\oh_Y(m)}\oh_V(m)$ with $L_{-1}=\oh_V(m)$. The morphism can be identified as
    $\mathrm{Hom}(\mathcal{I}_V(m),L_0)\rightarrow \mathrm{Hom}(\mathcal{I}_V(m),L_m)$. We show the injections $\mathrm{Hom}(\mathcal{I}_V(m),L_{j-1})\hookrightarrow \mathrm{Hom}(\mathcal{I}_V(m),L_j)(1\leq j \leq m)$. 

    The left mutation is defined by
    $\bigoplus_{i\leq j}\mathrm{Hom}(\oh_Y(m-j)[i],L_{j-1})\otimes \oh_Y(m-j)[i]\rightarrow L_{j-1} \rightarrow L_j.$
    It suffices to show $\mathrm{Hom}(\mathcal{I}_V(m),\oh_Y(m-j)[i])=0$ for $1\leq j \leq m,~i\leq j,~2g-3-l\leq m \leq 2g-3$. By Serre duality, we have $\mathrm{Hom}(\mathcal{I}_V(m),\oh_Y(m-j)[i])\cong \mathrm{Hom}(\oh_Y,\mathcal{I}_V(j-2g+2)[2g-1-i])^{\vee}$. As $1\leq j \leq m\leq 2g-3$, we have $\oh_Y(j-2g+2)\in \oh_Y^{\perp}$. Consider the long exact sequence induced by 
    $$0\rightarrow \mathcal{I}_V(j-2g+2) \rightarrow \oh_Y(j-2g+2) \rightarrow \oh_V(j-2g+2) \rightarrow 0,$$
    we have $\mathrm{Hom}(\oh_Y,\mathcal{I}_V(j-2g+2)[2g-1-i])\cong H^{2g-2-i}(V,\oh_V(j-2g+2))$, the latter cohomology group is zero by above discussions. 
\end{proof}
\begin{corollary}\label{cor of compare}
    The vector bundle $\mathcal{F}_{m,V}$ is simple and the map $\alpha^l_m:V \mapsto [\mathcal{F}_{m,V}]$ is injective. As notations in lemma \ref{compare hom}, $V_2\subseteq V_1$ if and only if $\mathrm{Hom}(\mathcal{F}_{m,V_1},\mathcal{F}_{m,V_2})\ne 0$. 
\end{corollary}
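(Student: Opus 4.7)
All three statements of the corollary will be deduced directly from the isomorphism $\mathrm{Hom}(\mathcal{F}_{m,V_1},\mathcal{F}_{m,V_2})\cong\mathrm{Hom}(\oh_{V_1},\oh_{V_2})$ proved in Lemma~\ref{compare hom}, combined with an elementary analysis of morphisms between structure sheaves of linear subspaces of a projective space. The rough strategy is to handle the biconditional about $V_2\subseteq V_1$ first, then harvest simplicity and injectivity as immediate corollaries.

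I would first dispose of the equivalence $V_2\subseteq V_1\Leftrightarrow \mathrm{Hom}(\mathcal{F}_{m,V_1},\mathcal{F}_{m,V_2})\neq 0$. By the lemma this reduces to showing $\mathrm{Hom}(\oh_{V_1},\oh_{V_2})\neq 0$ iff $V_2\subseteq V_1$. The forward direction is immediate: an inclusion $V_2\hookrightarrow V_1$ gives the restriction surjection $\oh_{V_1}\twoheadrightarrow\oh_{V_2}$. For the converse, I would identify $\mathrm{Hom}(\oh_{V_1},\oh_{V_2})$ with the sections of $\oh_{V_2}$ annihilated by the ideal $\mathcal{I}_{V_1}$; since $V_2$ is a projective space, $H^0(V_2,\oh_{V_2})=\mathbb{C}$, and the constant section $1$ is annihilated by $\mathcal{I}_{V_1}$ precisely when the restriction $\mathcal{I}_{V_1}|_{V_2}$ vanishes, equivalently $\mathcal{I}_{V_1}\subseteq \mathcal{I}_{V_2}$, i.e. $V_2\subseteq V_1$.

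Specialising Lemma~\ref{compare hom} to $V_1=V_2=V$ then yields $\mathrm{End}(\mathcal{F}_{m,V})\cong \mathrm{End}(\oh_V)=\mathbb{C}$, so $\mathcal{F}_{m,V}$ is simple. The injectivity of $\alpha^l_m$ is the last immediate consequence: if $V_1,V_2$ are two linear subspaces of the common dimension $l$ with $\mathcal{F}_{m,V_1}\cong \mathcal{F}_{m,V_2}$, then both $\mathrm{Hom}(\mathcal{F}_{m,V_1},\mathcal{F}_{m,V_2})$ and $\mathrm{Hom}(\mathcal{F}_{m,V_2},\mathcal{F}_{m,V_1})$ are nonzero, so the equivalence just established gives $V_1\subseteq V_2$ and $V_2\subseteq V_1$, and dimension comparison forces $V_1=V_2$. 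Since Lemma~\ref{compare hom} has already carried the entire categorical burden, there is no genuine obstacle here; the corollary is essentially a bookkeeping consequence once the Hom-isomorphism of the lemma is in hand.
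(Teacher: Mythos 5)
Your proof is correct and follows exactly the route the paper intends: the corollary is stated without proof as an immediate consequence of Lemma~\ref{compare hom}, and your derivation (the Hom-isomorphism plus the elementary fact that $\mathrm{Hom}(\oh_{V_1},\oh_{V_2})$ is $\mathbb{C}$ or $0$ according to whether $V_2\subseteq V_1$) simply makes that implicit bookkeeping explicit.
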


\subsection{The dual of the vector bundles}
In this subsection, we give a explicit description of the dual $\mathcal{F}_{m,V}^{\vee}$. As a consequence, when $\dim(V)=g-2$, we show the rank $2$ bundle $\mathcal{F}_{m,V}$ is of fixed determinant. Recall that the hyperelliptic curve $C$ is the fine moduli space of the spinor bundles. Let $S$ be the universal family on $C\times Y$, which is unique up to a twist by the pull back of a line bundle on $C$. Let $\tau:C\rightarrow C$ be the hyperelliptic involution and denote $\tau':=(\tau,id_Y):C\times Y\rightarrow C\times Y$.
\begin{lemma}\label{the dual of the universal spinor}
    If $g$ is even, we have an isomorphism $S^{\vee}\otimes p^*_C M_S\cong S\otimes p_Y^*\oh_Y(1)$ for some line bundle $M_S$ on $C$. If $g$ is odd, we have an isomorphism $S^{\vee}\otimes p^*_C M_S\cong \tau'^*S\otimes p_Y^*\oh_Y(1)$ for some line bundle $M_S$ on $C$.
\end{lemma}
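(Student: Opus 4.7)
The plan is to first establish the claim fiberwise over $C$ and then globalize via the fine moduli interpretation of $C$. At a point $x\in C$ lying over $\lambda\in\mathbb{P}^1$ with $Q_\lambda$ smooth, $S_x$ is one of the two spinor bundles on the smooth $2g$-dimensional quadric $Q_\lambda$ and $S_{\tau(x)}$ is the other. The classical duality formula for spinor bundles on a smooth quadric $Q^{2n}$, obtained from an analysis of the two half-spin representations of $\mathrm{Spin}(2n+2)$ (see \cite{ottaviani1988spinor}), reads
\[
(S^{\pm})^{\vee}\cong S^{\pm}(1) \text{ if $n$ is even,} \qquad (S^{\pm})^{\vee}\cong S^{\mp}(1) \text{ if $n$ is odd}.
\]
Specializing to $n=g$ and restricting along $Y\hookrightarrow Q_x$, this yields
\[
S_x^{\vee}\cong S_x\otimes\oh_Y(1) \text{ ($g$ even)}, \qquad S_x^{\vee}\cong S_{\tau(x)}\otimes\oh_Y(1) \text{ ($g$ odd)}.
\]
At the ramification points of $C\to\mathbb{P}^1$ one has $\tau(x)=x$, so the two formulas coincide there and no contradiction arises.

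For the globalization, set $\mathcal{E}:=S^{\vee}\otimes p_Y^*\oh_Y(-1)$; this is a $C$-flat family of spinor bundles on $Y$. By the fiberwise computation, the classifying map $C\to C$ associated to $\mathcal{E}$ is $\mathrm{id}_C$ when $g$ is even and $\tau$ when $g$ is odd. Since $C$ is a fine moduli space with universal family $S$ (unique up to twisting by $p_C^*M$ for some line bundle $M$ on $C$), any flat family over $C$ whose classifying map is $f:C\to C$ is isomorphic to $(f\times\mathrm{id}_Y)^*S$ up to pullback of a line bundle on $C$. Applied to $\mathcal{E}$, this produces a line bundle $M_S$ on $C$ together with an isomorphism $\mathcal{E}\otimes p_C^*M_S\cong S$ in the even case and $\mathcal{E}\otimes p_C^*M_S\cong \tau'^*S$ in the odd case, which rearranges to the stated formulas.

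The main technical point is the fiberwise step at the ramification locus, where $Q_x$ is a cone over a smooth $(2g-1)$-quadric and $S_x$ is the pullback of the corresponding spinor bundle $S_{2g-1}$ along the projection from the cone point. Here one can either invoke the known duality $S_{2g-1}^{\vee}\cong S_{2g-1}(1)$ on the smooth base and pull it back, or simply check the isomorphism over the dense open subset of $C$ parametrizing smooth fibers and extend it to all of $C\times Y$ using reflexivity of the (locally free) sheaves on both sides. Once the fiberwise identifications are in place on all of $C$, the moduli-theoretic globalization is formal.
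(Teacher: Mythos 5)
Your proposal is correct and follows essentially the same route as the paper: apply the fiberwise spinor-bundle duality from \cite{ottaviani1988spinor} (Theorem 2.8) to identify $S^{\vee}\otimes p_Y^*\oh_Y(-1)$ restricted to $\{x\}\times Y$ with $S_x$ or $S_{\tau(x)}$ according to the parity of $g$, and then conclude by the universal property of the fine moduli space, which determines the family up to a twist by $p_C^*M_S$. Your extra care at the ramification locus is a reasonable addition but not a departure from the paper's argument.
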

\begin{proof}
    Let $x\in C$ be a closed point and denote the spinor bundle corresponding to $x$ by $S_x$. By \cite[Theorem 2.8 (ii)]{ottaviani1988spinor}, the restriction of the vector bundle $S^{\vee}\otimes p_Y^*\oh_Y(-1)$ to $\{x\}\times Y$ is isomorphic to $S_x$ when $g$ is even, and is isomorphic to $S_{\tau(x)}$ when $g$ is odd. Since $S$ is the universal spinor bundle, we have $S^{\vee}\otimes p_Y^*\oh_Y(-1)\otimes p^*_C M_S\cong S$ for some line bundle $M_S$ on $C$ when $g$ is even and $S^{\vee}\otimes p_Y^*\oh_Y(-1)\otimes p^*_C M_S\cong \tau'^*S$ for some line bundle $M_S$ on $C$ when $g$ is odd.
\end{proof}

\begin{proposition}\label{the dual of the vector bundes}
    Let $V$ be a linear subspace of dimension $l$ in $Y$ and let $M_S$ be the line bundle on $C$ in Lemma \ref{the dual of the universal spinor}. If $g$ is even, we have an isomorphism $\mathcal{F}_{m,V}^{\vee}\cong \mathcal{F}_{4g-6-l-m,V}\otimes M_S$. If $g$ is odd, we have an isomorphism $\mathcal{F}_{m,V}^{\vee}\cong \tau^*\mathcal{F}_{4g-6-l-m,V}\otimes M_S$
\end{proposition}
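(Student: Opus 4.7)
The plan is to compute $\mathcal{F}_{m,V}^\vee$ directly via Grothendieck--Serre duality along the projection $p_C\colon C\times Y\to C$, and then to use Lemma~\ref{the dual of the universal spinor} at the end to reintroduce the spinor kernel with the correct twist.

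First I would unwind $\Phi^*$ as the Fourier--Mukai transform with kernel $S^\vee\otimes p_Y^*\omega_Y[2g-1]$, so that
$$\mathcal{F}_{m,V}\;=\;R(p_C)_*\bigl(S^\vee\otimes p_Y^*\oh_V(m-2g+2)\bigr)[2g-m-3].$$
Grothendieck--Verdier duality along $p_C$, with relative dualizing complex $p_C^!\oh_C=p_Y^*\omega_Y[2g-1]$, lets me move the dualization inside the push-forward; since $S^\vee$ is locally free, this reduces to dualizing $\oh_V(m-2g+2)$ on $Y$. Grothendieck duality for the closed embedding $V\hookrightarrow Y$ (using $\omega_V=\oh_V(-l-1)$) then gives $R\mathcal{H}om_Y(\oh_V,\omega_Y)\cong \oh_V(-l-1)[-(2g-1-l)]$, and collecting the twists yields the intermediate identity
$$\mathcal{F}_{m,V}^\vee\;\cong\;R(p_C)_*\bigl(S\otimes p_Y^*\oh_V(2g-3-l-m)\bigr)[\,l+m-2g+3\,].$$

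The second step is to invoke Lemma~\ref{the dual of the universal spinor} to trade $S$ for $S^\vee$. In the even case I substitute $S\cong S^\vee\otimes p_Y^*\oh_Y(-1)\otimes p_C^*M_S$; the projection formula extracts $M_S$, and after reabsorbing $\omega_Y=\oh_Y(-(2g-2))$ the remaining push-forward is $\Phi^*(\oh_V(4g-6-l-m))[-(2g-1)]$, which is $\mathcal{F}_{4g-6-l-m,V}$ up to a shift that should cancel against the accumulated shifts. In the odd case the relation carries an extra $\tau'^*$, and the base-change identity $R(p_C)_*\circ\tau'^*=\tau^*\circ R(p_C)_*$, coming from $p_C\circ\tau'=\tau\circ p_C$, produces the extra $\tau^*$ in the conclusion. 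The main obstacle is purely shift bookkeeping: four distinct shifts enter the calculation (the defining $[-m-2]$, the $[2g-1]$ from the kernel of $\Phi^*$, the $[-(2g-1-l)]$ from local duality on $V\subset Y$, and the $[2g-1]$ from $p_C^!$), and one must verify they cancel to leave the answer in cohomological degree zero. As sanity checks, the rank $2^{g-1-l}$ matches on both sides and the index $4g-6-l-m$ lies in the admissible range $[2g-3-l,\,2g-3]$ of Proposition~\ref{lin sp in Y} precisely when $m$ does.
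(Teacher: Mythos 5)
Your proposal follows essentially the same route as the paper's proof: express $\Phi^*$ as the Fourier--Mukai transform with kernel $S^\vee\otimes p_Y^*\omega_Y[2g-1]$, apply Grothendieck--Verdier duality along $p_C$ together with the local duality $\oh_V^\vee\cong\oh_V(2g-3-l)[l-2g+1]$ to reach the same intermediate identity, and then substitute Lemma~\ref{the dual of the universal spinor} (with the base-change identity $R(p_C)_*\circ\tau'_*=\tau_*\circ R(p_C)_*$ and $\tau^*\cong\tau_*$ in the odd case). The shift bookkeeping you flag does work out exactly as in the paper, so the argument is correct.
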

\begin{proof}
    Since $\Phi$ is a Fourier mukai transform with integral kernel $S$, the left adjoint $\Phi^*$ is the Fourier mukai transform $R{p_C}_*(S^{\vee}\otimes p_Y^*\omega_Y\otimes p_Y^*(-))[2g-1]$. We have
    $$\mathcal{F}_{m,V}=\Phi^*(\oh_V(m))[-m-2]=R{p_C}_*(S^{\vee}\otimes p_Y^*\omega_Y\otimes p_Y^*\oh_V(m))[2g-3-m].$$
    The dual of it is
    \begin{equation*}
        \begin{aligned}
            &R\mathcal{H}om(\mathcal{F}_{m,V}, \oh_C)=R\mathcal{H}om(R{p_C}_*(S^{\vee}\otimes p_Y^*\omega_Y\otimes p_Y^*\oh_V(m))[2g-3-m], \oh_C)\\
            \cong&R{p_C}_*R\mathcal{H}om(S^{\vee}\otimes p_Y^*\omega_Y\otimes p_Y^*\oh_V(m)[2g-3-m],p_C^!\oh_C)~(\text{Grothendieck-Verdier duality})\\
            \cong&R{p_C}_*R\mathcal{H}om(S^{\vee}\otimes p_Y^*\omega_Y\otimes p_Y^*\oh_V(m)[2g-3-m],p_C^*\oh_C\otimes p_Y^*\omega_Y[2g-1])\\
            \cong&R{p_C}_*(R\mathcal{H}om(p_Y^*\oh_V(m),p_Y^*\oh_Y)\otimes S)[m+2]~(p_C^*\oh_C=\oh_{C\times Y}=p_Y^*\oh_Y)\\
            \cong& R{p_C}_*(S\otimes p_Y^*\oh_V(2g-3-l-m))[m+l-2g+3]~(\oh_V^{\vee}\cong \oh_V(2g-3-l)[l-2g+1]).
        \end{aligned}
    \end{equation*}
    If $g$ is even, by Lemma \ref{the dual of the universal spinor} we have
    \begin{equation*}
        \begin{aligned}
            \mathcal{F}_{m,V}^{\vee}\cong& R{p_C}_*(S^{\vee} \otimes p_Y^*(\oh_V(2g-4-l-m))[m+l-2g+3]\otimes M_S\\
            \cong & R{p_C}_*(S^{\vee} \otimes p_Y^*\omega_Y \otimes p_Y^*(\oh_V(4g-6-l-m))[m+l-2g+3]\otimes M_S\\
            \cong& \mathcal{F}_{4g-6-l-m,V}\otimes M_S.
        \end{aligned}
    \end{equation*}
    If $g$ is odd, note the isomorphisms $\tau'^*p_Y^*\cong p_Y^*$ and $\tau^*\cong \tau_*,\tau'^*\cong \tau'_*$ (The latter is due to that $\tau$ and $\tau'$ are involutions), by Lemma \ref{the dual of the universal spinor} we have 
    \begin{equation*}
        \begin{aligned}
            \mathcal{F}_{m,V}^{\vee}\cong& R{p_C}_*(\tau'^*S^{\vee} \otimes p_Y^*(\oh_V(2g-4-l-m))[m+l-2g+3]\otimes M_S\\
            \cong & R{p_C}_*(\tau'^*S^{\vee} \otimes \tau'^*p_Y^*\omega_Y \otimes \tau'^*p_Y^*(\oh_V(4g-6-l-m))[m+l-2g+3]\otimes M_S\\
            \cong & R{p_C}_*\tau'_{*}(S^{\vee} \otimes p_Y^*\omega_Y \otimes ^*p_Y^*(\oh_V(4g-6-l-m))[m+l-2g+3]\otimes M_S\\
            \cong & \tau_*R{p_C}_*(S^{\vee} \otimes p_Y^*\omega_Y \otimes ^*p_Y^*(\oh_V(4g-6-l-m))[m+l-2g+3]\otimes M_S\\
            \cong &\tau_*\mathcal{F}_{4g-6-l,V}\otimes M_S\cong \tau^*\mathcal{F}_{4g-6-l,V}\otimes M_S.
        \end{aligned}
    \end{equation*}
\end{proof}
\begin{corollary}\label{fix determinant}
    For $g-1\leq m \leq 2g-3$, the map $\det \circ~ \alpha^{g-2}_m$ is constant. That is, the determinant of the rank 2 vector bundle $\mathcal{F}_{m,V}$ does not depend on the dimension $g-2$ space $V$.
\end{corollary}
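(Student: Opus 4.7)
The plan is to combine Proposition \ref{the dual of the vector bundes} with Proposition \ref{auto fr m to m+1} to produce a self-isomorphism $\mathcal{F}_{m,V}^{\vee} \cong \mathcal{F}_{m,V} \otimes L_m$ in which $L_m$ is a line bundle on $C$ independent of $V$. Taking determinants (which doubles every line-bundle factor since the rank is $2$) will then yield $(\det \mathcal{F}_{m,V})^{\otimes 2} \cong L_m^{\otimes -2}$, a fixed line bundle. The morphism $V \mapsto \det \mathcal{F}_{m,V}$ from $\mathcal{H}_{g-2}$ to $\mathrm{Pic}^{d_{m,g-2}}(C)$ will then land in the finite $\mathrm{Pic}^0(C)[2]$-torsor of square roots of $L_m^{\otimes -2}$, and connectedness of $\mathcal{H}_{g-2}$ will force it to be constant.

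Concretely, I would iterate $\mathcal{F}_{m+1,V} \cong \tau^*\mathcal{F}_{m,V} \otimes \mathcal{L}$ and set $M := \mathcal{L} \otimes \tau^*\mathcal{L}$, verifying inductively that $\mathcal{F}_{m+2j,V} \cong \mathcal{F}_{m,V} \otimes M^{\otimes j}$ and $\mathcal{F}_{m+2j+1,V} \cong \tau^*\mathcal{F}_{m,V} \otimes \mathcal{L} \otimes M^{\otimes j}$. The gap in the duality formula of Proposition \ref{the dual of the vector bundes} with $l = g-2$ is $k := 3g-4-2m$, which has the same parity as $g$. When $g$ is even, $k$ is even, the duality formula has no $\tau^*$, and the iteration introduces none; when $g$ is odd, both introduce one $\tau^*$ and these cancel. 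Either way I will arrive at $\mathcal{F}_{m,V}^{\vee} \cong \mathcal{F}_{m,V} \otimes L_m$ with $L_m$ an explicit product of powers of $M$, $\mathcal{L}$, $\tau^*\mathcal{L}$, and $M_S$, independent of $V$.

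Applying $\det$ and using $\det(\mathcal{F}_{m,V}^{\vee}) \cong (\det \mathcal{F}_{m,V})^{-1}$ together with $\det(\mathcal{F}_{m,V} \otimes L_m) \cong \det \mathcal{F}_{m,V} \otimes L_m^{\otimes 2}$, I obtain $(\det \mathcal{F}_{m,V})^{\otimes 2} \cong L_m^{\otimes -2}$. Hence $\det \circ \alpha_m^{g-2}: \mathcal{H}_{g-2} \to \mathrm{Pic}^{d_{m,g-2}}(C)$ has image in the finite set $\{\xi : \xi^{\otimes 2} \cong L_m^{\otimes -2}\}$, a $\mathrm{Pic}^0(C)[2]$-torsor of cardinality $2^{2g}$. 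Invoking the classical irreducibility of the Hilbert scheme of $(g-2)$-planes on a smooth intersection of two quadrics then forces this morphism to a point.

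The hard part will be precisely the $2$-torsion ambiguity: the algebraic manipulation pins down $(\det \mathcal{F}_{m,V})^{\otimes 2}$ but not $\det \mathcal{F}_{m,V}$ itself. Bridging this gap requires the geometric input of connectedness of $\mathcal{H}_{g-2}$, which must be obtained independently of the Desale-Ramanan isomorphism to avoid circularity; it can be derived from a tangent-space computation showing smoothness of $\mathcal{H}_{g-2}$ of the expected dimension combined with a Bertini-type argument on the ambient Grassmannian.
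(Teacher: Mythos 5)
Your proposal is correct and follows essentially the same route as the paper: both derive a self-duality $\mathcal{F}_{m,V}^{\vee}\cong\mathcal{F}_{m,V}\otimes L_m$ with $L_m$ independent of $V$ by combining Propositions \ref{the dual of the vector bundes} and \ref{auto fr m to m+1}, conclude that $(\det\mathcal{F}_{m,V})^{\otimes 2}$ is fixed, and finish by finiteness of the $2$-torsion ambiguity together with connectedness of $\mathcal{H}_{g-2}$ (which the paper takes from Reid's classical results on the Fano scheme, so no circularity arises). The only cosmetic difference is that the paper first reduces to a single well-chosen $m$ (namely $m=3n-2$ for $g=2n$ and $m=3n-1$ for $g=2n+1$, where $3g-4-m$ equals $m$ or $m+1$) rather than iterating the rotation functor for general $m$ as you do.
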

\begin{proof}
    By Proposition \ref{auto fr m to m+1}, it suffices to prove the assertion for one $m$. 
    
    If $g=2n$ is even, set $m=3n-2$. Apply Proposition \ref{the dual of the vector bundes} in case $l=g-2$, we have $\mathcal{F}_{m,V}^{\vee}\cong \mathcal{F}_{3g-4-m,V}\otimes M_S\cong \mathcal{F}_{m,V}\otimes M_S$. Taking determinant, we get $(\det(\mathcal{F}_{m,V})\otimes M_S)^{\otimes 2}\cong \oh_C$. The map $\mathcal{F}_{m,V}\mapsto \det(\mathcal{F}_{m,V}\otimes M_S)$ induces a morphism from $\mathcal{H}_{g-2}$ to $\mathrm{Pic}(C)$ and the image lies on the set of torsion points of order $2$, which is finite. We conclude by the connectedness of $\mathcal{H}_{g-2}$ that the above morphism is constant, which means $\det(\mathcal{F}_{m,V})$ is fixed.

    If $g=2n+1$ is odd, set $m=3n-1$.  By Proposition \ref{auto fr m to m+1} and Proposition \ref{the dual of the vector bundes}, we have $\mathcal{F}_{m,V}^{\vee}\cong\tau^*\mathcal{F}_{3g-4-m,V}\otimes M_S \cong \tau^*\mathcal{F}_{m+1,V}\otimes M_S\cong \mathcal{F}_{m,V}\otimes \tau^*\mathcal{L} \otimes M_S$, where $\mathcal{L}$ is the line bundle in Proposition \ref{auto fr m to m+1}. We can prove by similar arguments as above. 
\end{proof}

\section{Projections as morphisms}
\label{section_projection_induce_morphism}
 Let $V_l$ denote a linear subspace of dimension $l$. The degree $\deg(\mathcal{F}_{m,V_l})$ does not depend on the choice of $V_l$, so we may set $d_{m,l}=\deg(\mathcal{F}_{m,V_l})$. The difference of the degrees $d_{m+1,l}-d_{m,l}$ is equal to $-2^{g-1-l}$ by Proposition \ref{auto fr m to m+1} and we also have $d_{m,l}=d_{m,l+1}+d_{m-1,l+1}$ by Proposition \ref{ext prop}. By twisting the universal spinor bundle, we may assume that $d_{g-2,g-1}=0$. 
\begin{ex}
    Let $g=3$. The subspace of maximal dimension is a plane $V$ in Y. Under the assumption $d_{1,V}=\deg(\mathcal{F}_{2,V})=0$, we list the associated bundles with their ranks and degrees under the semi-orthgonal decomposition as follows.
\begin{center}
\begin{tabular}{c c c c c }\label{table}
$D^b(Y)=\langle D^b(C),$ &$ \oh_Y,$ & $\oh_Y(1),$ & $\oh_Y(2),$ & $\oh_Y(3),\rangle$ \\ 
\hline
For a plane $V$ &  & $\mathcal{F}_{1,V}$ & $\mathcal{F}_{2,V}$ & $\mathcal{F}_{3,V}$ \\ 
$(rk(\mathcal{F}),\deg(\mathcal{F}))$& & $(1,0)$ & $(1,-1)$ & $(1,-2)$\\
\hline
For a line $L$ &  &  & $\mathcal{F}_{2,L}$ & $\mathcal{F}_{3,L}$ \\ 
& & & $(2,-1)$ & $(2,-3)$\\
\hline
For a point $p$ &  &  &  & $\mathcal{F}_{3,p}$ \\ 
& & &  & $(4,-4)$\\
\end{tabular}
\end{center}
\end{ex}

We're going to show the map $\alpha^l_m$ defines a morphism when the associated bundle is stable.
\begin{proposition}\label{stable then closed embedding}
    Fix an integer $m$ satisfying $2g-3-l\leq m \leq 2g-3$. If the vector bundle $\mathcal{F}_{m,V}$ is stable for each linear subspace $V$ of dimension $l$, then we have a closed embedding $\alpha^l_m:\mathcal{H}_l\rightarrow U_C^s(2^{g-1-l},d_{m,l})$, which is the map defined in Definition \ref{proj of lin} at the level of closed points.
\end{proposition}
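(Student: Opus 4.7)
The plan is to construct $\alpha^l_m$ as a morphism of schemes and then show it is proper, injective on closed points, and unramified; combined these imply it is a closed embedding. To construct the morphism, let $\mathcal{V}\subset \mathcal{H}_l\times Y$ be the universal flat family of $l$-dimensional linear subspaces of $Y$, and pull the universal spinor bundle $S$ back from $C\times Y$ to $\mathcal{H}_l\times C\times Y$. I would apply the \emph{relative} analogue of the functor $\Phi^{*}[-m-2]$ to the twisted structure sheaf $\oh_{\mathcal{V}}(m)$; by Proposition~\ref{lin sp in Y} together with cohomology and base change, the relative derived pushforward is concentrated in a single degree on every fibre, so it is a genuine coherent sheaf on $\mathcal{H}_l\times C$ whose restriction to $\{[V]\}\times C$ is $\mathcal{F}_{m,V}$. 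Since each fibre is locally free of the same rank $2^{g-1-l}$, flatness over $\mathcal{H}_l$ is automatic, and the resulting family of stable bundles on $C$ induces a classifying morphism $\alpha^l_m\colon \mathcal{H}_l\to U_C^s(2^{g-1-l},d_{m,l})$ that agrees on closed points with Definition~\ref{proj of lin}.

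Injectivity on closed points is then immediate from Corollary~\ref{cor of compare}, and properness follows because $\mathcal{H}_l$ is projective (as a component of the Hilbert scheme parametrising subschemes of $Y$ with the Hilbert polynomial of an $l$-plane) while $U_C^s$ is separated. For the differential, I identify $T_{[V]}\mathcal{H}_l\cong \mathrm{Hom}(\mathcal{I}_V,\oh_V)$ and $T_{[\mathcal{F}_{m,V}]}U_C^s\cong \mathrm{Ext}^1(\mathcal{F}_{m,V},\mathcal{F}_{m,V})$, and then unwind the Kodaira--Spencer description of the classifying map built in the previous paragraph to show that $d\alpha^l_m$ coincides with the composition
$$\mathrm{Hom}(\mathcal{I}_V,\oh_V)\longrightarrow \mathrm{Ext}^1(\mathcal{I}_V,\mathcal{I}_V)\longrightarrow \mathrm{Ext}^1(\mathcal{F}_{m,V},\mathcal{F}_{m,V})$$
from Lemma~\ref{compare hom}. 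Here the first arrow is the connecting homomorphism sending an infinitesimal deformation of $V\subset Y$ to the corresponding extension class of $\mathcal{I}_V$, and the second arrow is induced by applying $\Phi^{*}(-\otimes \oh_Y(m))[-m-2]$ and the unit of adjunction, exactly as assembled in the proof of that lemma. Since Lemma~\ref{compare hom} guarantees both arrows are injective, $\alpha^l_m$ is unramified, and a proper, injective, unramified morphism to a separated scheme is a closed embedding.

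The main obstacle is the identification in the previous step of the scheme-theoretic differential $d\alpha^l_m$ with the purely algebraic composition of Lemma~\ref{compare hom}: one must verify that the first-order family of bundles obtained by applying the relative Fourier--Mukai transform to a first-order deformation of $V$ is classified in $\mathrm{Ext}^1(\mathcal{F}_{m,V},\mathcal{F}_{m,V})$ by precisely the image of that deformation under the composition above. All other ingredients — existence of the morphism via relative Fourier--Mukai, properness from projectivity of $\mathcal{H}_l$, and set-theoretic injectivity from Corollary~\ref{cor of compare} — are formal consequences of material already established earlier in the paper.
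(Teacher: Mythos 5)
Your proposal follows essentially the same route as the paper's proof: construct the family by applying the relative Fourier--Mukai transform with kernel $S_m$ to the universal family over $\mathcal{H}_l\times Y$, use base change to identify the fibres with $\mathcal{F}_{m,V}$, get properness from projectivity of $\mathcal{H}_l$ and separatedness of $U_C^s$, and conclude via injectivity on closed points and on tangent spaces, both supplied by Lemma~\ref{compare hom} and Corollary~\ref{cor of compare}. The only difference is that you spell out the identification of $d\alpha^l_m$ with the composition $\mathrm{Hom}(\mathcal{I}_V,\oh_V)\to\mathrm{Ext}^1(\mathcal{I}_V,\mathcal{I}_V)\to\mathrm{Ext}^1(\mathcal{F}_{m,V},\mathcal{F}_{m,V})$ more explicitly than the paper does, which is a reasonable point of care but not a different argument.
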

\begin{proof}
    Assume that $\mathcal{F}_{m,V}$ is stable. Since the functor $\Phi^*(-\otimes \oh_Y(m))[-m-2]:D^b(Y)\rightarrow D^b(C)$ is of Fourier-mukai type, let $S_m\in D^b(C\times Y)$ be the corresponding integral kernel. Let $\tilde{F}$ be the universal family on $Y\times\mathcal{H}_l$. We define
    $\phi':=\phi_{S_m}\times id_{\mathcal{H}_l}=\phi_{S_m\boxtimes \oh_{\Delta \mathcal{H}_l}}:D^b(Y\times \mathcal{H}_l)\rightarrow D^b(C\times \mathcal{H}_l)$. 
    
    Let $[V]$ denote the closed point corresponding to the quotient $[\oh_Y\rightarrow \oh_V]$ at $\mathcal{H}_{l}$ and we denote $i_{V}:[V]\times C\rightarrow \mathcal{H}_l\times C$. Let $i:C\times[V]\times Y\times \mathcal{H}_l\rightarrow C\times \mathcal{H}_l\times Y\times \mathcal{H}_l$ be the embedding induced by $i_V$. We have $\phi_{S_m}(\oh_V)=\mathcal{F}_{m,V}$ and
    $i^*_V(\phi_{S_m\boxtimes\oh_{\Delta\mathcal{H}_l}}(\tilde{F}))\cong \phi_{i^*(S_m\boxtimes{\oh_{\Delta\mathcal{H}_l}})}(\tilde{F})\cong \phi_{S_m}(\oh_V)=\mathcal{F}_{m,V}$ by the base change theorem and the fact that $\mathcal{H}_l$ is smooth (See \cite[Theorem 2.6]{reid1972complete}). Therefore $\phi'(\tilde{F})$ is a vector bundle on $C\times \mathcal{H}_l$ and induces a morphism to $U_C^s(2^{g-1-l},d_{m,l})$, which is the map $\alpha^l_m$ defined in \ref{proj of lin} at the level of closed points.

    Since $\mathcal{H}_l$ is projective and $U^s_C(2^{g-1-l},d_{m,l})$ is separated, $\alpha_m^l$ is a projective morphism. By Lemma \ref{compare hom}, the morphism $\alpha_m^l$ and the tangent map $d\alpha^l_m:\mathrm{Hom}(\mathcal{I}_V,\oh_V)\rightarrow \mathrm{Ext}^1(\mathcal{F}_{m,V},\mathcal{F}_{m,V})$ are injective on closed points, which implies that $\alpha_m^l$ is a closed immersion.
\end{proof}

We now give an alternative proof of \cite[Theorem 4.8]{reid1972complete} and \cite[Theorem 2]{desale1976classification} via techniques of derived category. 

\begin{theorem}\label{pl iso to pic}
    Let $\mathcal{H}_{g-1}$ be the Hilbert scheme of linear subspaces of maximal dimension $g-1$ in $Y$, then the morphism $$\alpha_m^{g-1}:\mathcal{H}_{g-1}\rightarrow \mathrm{Pic}^{g-2-m}(Y),V \mapsto \mathcal{F}_{m,V}~(g-2\leq m \leq 2g-3)$$ is an isomorphism. 
\end{theorem}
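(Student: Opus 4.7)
The plan is to combine Proposition~\ref{stable then closed embedding} with a normal-bundle calculation to pin down $\dim\mathcal{H}_{g-1}$. Since $\mathcal{F}_{m,V}$ is a line bundle (by the corollary following Proposition~\ref{lin sp in Y}) it is trivially stable, so Proposition~\ref{stable then closed embedding} already provides a closed embedding $\alpha_m^{g-1}:\mathcal{H}_{g-1}\hookrightarrow\mathrm{Pic}^{d_{m,g-1}}(C)$. The degree $d_{m,g-1}=g-2-m$ is read off from Proposition~\ref{auto fr m to m+1} together with the normalization $d_{g-2,g-1}=0$, so the target is an abelian variety of dimension $g$, in particular smooth and irreducible.

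To upgrade this closed embedding to an isomorphism I would show that $\dim\mathcal{H}_{g-1}=g$. For the upper bound, I use the injection
$$\mathrm{Hom}(\mathcal{I}_V,\oh_V)\hookrightarrow\mathrm{Ext}^1(\mathcal{F}_{m,V},\mathcal{F}_{m,V})$$
from the last part of Lemma~\ref{compare hom}; the left-hand side is the tangent space $H^0(V,\mathcal{N}_{V/Y})$ of $\mathcal{H}_{g-1}$ at $[V]$, and the right-hand side is $H^1(C,\oh_C)=\mathbb{C}^g$ since $\mathcal{F}_{m,V}$ is a line bundle. For the lower bound, I use the normal-bundle exact sequence
$$0\to\mathcal{N}_{V/Y}\to\oh_V(1)^{\oplus g+2}\to\oh_V(2)^{\oplus 2}\to 0$$
coming from $V\cong\mathbb{P}^{g-1}\subset Y\subset\mathbb{P}^{2g+1}$, combined with the vanishing $H^{\geq 1}(\mathbb{P}^{g-1},\oh(1))=0$, to compute $\chi(\mathcal{N}_{V/Y})=g(g+2)-g(g+1)=g$. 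The two bounds force $h^0(\mathcal{N}_{V/Y})=g$ and $h^1(\mathcal{N}_{V/Y})=0$.

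Finally, I would invoke smoothness of $\mathcal{H}_{g-1}$ at $[V]$ (either by citing \cite[Theorem 2.6]{reid1972complete}, or directly from $h^1(\mathcal{N}_{V/Y})=0$ and standard deformation theory for the regular embedding $V\subset Y$) to conclude $\dim_{[V]}\mathcal{H}_{g-1}=g$ at every point. A closed embedding of a scheme of pure dimension $g$ into the irreducible $g$-dimensional variety $\mathrm{Pic}^{g-2-m}(C)$ must be surjective, and combined with injectivity of $\alpha_m^{g-1}$ this forces $\mathcal{H}_{g-1}$ to be irreducible and the map to be an isomorphism. The main subtlety lies in making the categorical upper bound from $\mathrm{Ext}^1(\mathcal{F}_{m,V},\mathcal{F}_{m,V})$ and the classical lower bound from the normal-bundle Euler characteristic match exactly; the derived-categorical input of Lemma~\ref{compare hom} is what furnishes the sharp bound $h^0(\mathcal{N}_{V/Y})\leq g$ intrinsically.
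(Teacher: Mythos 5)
Your proposal is correct and follows essentially the same route as the paper: the closed embedding comes from Proposition~\ref{stable then closed embedding}, and the isomorphism then follows from $\dim\mathcal{H}_{g-1}=g$ together with the irreducibility of the $g$-dimensional target $\mathrm{Pic}^{g-2-m}(C)$. The only divergence is that the paper simply cites \cite[Theorem 2.6]{reid1972complete} for the dimension of $\mathcal{H}_{g-1}$, whereas you rederive it by playing the Euler characteristic $\chi(\mathcal{N}_{V/Y})=g$ from the normal-bundle sequence against the upper bound $h^0(\mathcal{N}_{V/Y})\leq \mathrm{ext}^1(\mathcal{F}_{m,V},\mathcal{F}_{m,V})=h^1(C,\oh_C)=g$ supplied by Lemma~\ref{compare hom}; both computations check out, and your version has the mild advantage of being self-contained and of exhibiting the smoothness of $\mathcal{H}_{g-1}$ directly.
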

\begin{proof}
    By Proposition \ref{stable then closed embedding}, $\alpha^{g-1}_m$ is a closed immersion. The Fano scheme $\mathcal{H}_{g-1}$ is of the same dimension $g$ as that of $\mathrm{Pic}^{g-2-m}(Y)$ by \cite[Theorem 2.6]{reid1972complete}. Since $\mathrm{Pic}^{g-2-m}(Y)$ is an irreducible variety, the closed embedding $\alpha_m^{g-1}$ is an isomorphism.
\end{proof}

\begin{theorem}\label{stab bd for line}
    Let $L$ be a linear subspace of dimension $g-2$, then the rank 2 bundle $\mathcal{F}_{m,L}$ is stable. In particular, we have an isomorphism $\alpha^{g-2}_m:\mathcal{H}_{g-2}\rightarrow SU_C(2,h_m)$ for $g-1\leq m \leq 2g-3$, where $h_m=\det(\mathcal{F}_{m,L})$ is a fixed line bundle on $C$ of odd degree $2g-3-2m$.
\end{theorem}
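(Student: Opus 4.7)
The plan is to prove stability of $\mathcal{F}_{m,L}$ by a direct contradiction argument exploiting the non-trivial extension of Proposition~\ref{ext prop}, then to upgrade the resulting closed embedding from Proposition~\ref{stable then closed embedding} to an isomorphism via a dimension count.

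First, compute $\deg(\mathcal{F}_{m,L}) = 2g - 3 - 2m$ from the degree formulas $d_{m,l} = d_{m,l+1} + d_{m-1,l+1}$ and $d_{m,g-1} = g - 2 - m$. Since this degree is odd, the slope $\mu(\mathcal{F}_{m,L}) = g - 3/2 - m$ is half-integral; semistability therefore coincides with stability, and a destabilizing line sub-bundle $\mathcal{M} \hookrightarrow \mathcal{F}_{m,L}$ must satisfy $\deg(\mathcal{M}) \geq g - 1 - m$. Pick any $(g-1)$-dimensional linear subspace $V \subset Y$ containing $L$; Proposition~\ref{ext prop} then gives the non-trivial extension
\[ 0 \to \mathcal{F}_{m,V} \to \mathcal{F}_{m,L} \to \mathcal{F}_{m-1,V} \to 0, \]
where $\mathcal{F}_{m,V}$ and $\mathcal{F}_{m-1,V}$ are line bundles of degrees $g - 2 - m$ and $g - 1 - m$ respectively. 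Consider the composition $\phi \colon \mathcal{M} \to \mathcal{F}_{m,L} \to \mathcal{F}_{m-1,V}$: if $\phi = 0$, then $\mathcal{M}$ embeds into the line bundle $\mathcal{F}_{m,V}$, forcing $\deg(\mathcal{M}) \leq g - 2 - m$ and contradicting the destabilizing assumption; if $\phi \neq 0$, then a nonzero map of line bundles forces $\deg(\mathcal{M}) \leq g - 1 - m$, so equality holds, $\phi$ is an isomorphism, and it produces a splitting of the extension, contradicting non-triviality (equivalently, the simplicity of $\mathcal{F}_{m,L}$ established in Corollary~\ref{cor of compare}).

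With stability established, Proposition~\ref{stable then closed embedding} gives a closed embedding $\alpha_m^{g-2} \colon \mathcal{H}_{g-2} \hookrightarrow U_C^s(2, 2g-3-2m)$, and by Corollary~\ref{fix determinant} its image lies inside $SU_C(2, h_m)$ for a fixed line bundle $h_m$ of degree $2g-3-2m$. A standard expected-dimension computation for the Fano scheme of $l$-planes in a smooth intersection of two quadrics yields $\dim \mathcal{H}_{g-2} = (g-1)(2g-1-2(g-2)) = 3g-3 = \dim SU_C(2, h_m)$, and since both source and target are irreducible of the same dimension, the closed embedding is automatically surjective and hence an isomorphism. The main delicacy lies in the stability case analysis: one must control the composition $\phi$ tightly enough that nonvanishing forces a splitting, so the non-triviality guaranteed by Proposition~\ref{ext prop} (and ultimately the simplicity from Corollary~\ref{cor of compare}) is doing the essential work.
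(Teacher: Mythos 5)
Your degree computations, the two-case analysis of the composition $\phi$, and the concluding dimension count all match the paper's argument. But there is a genuine gap at the very first step of the stability proof: you write ``Pick any $(g-1)$-dimensional linear subspace $V \subset Y$ containing $L$,'' and such a $V$ need not exist. The incidence variety $\{(L,V): L\subset V,\ \dim L = g-2,\ \dim V = g-1\}$ is a $\mathbb{P}^{g-1}$-bundle over $\mathcal{H}_{g-1}$, hence has dimension $g+(g-1)=2g-1$, whereas $\dim \mathcal{H}_{g-2}=3(g-1)$. For $g\geq 3$ we have $2g-1<3g-3$, so the \emph{generic} $(g-2)$-plane in $Y$ is contained in no maximal linear subspace, and your extension sequence is simply unavailable for it. Your argument as written only proves stability of $\mathcal{F}_{m,L}$ for $L$ in the proper closed locus of $(g-2)$-planes that do lie in a $(g-1)$-plane.

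The paper closes exactly this gap with an extra twist that your proposal is missing. It first reduces to $m=g-1$ (so $\mathcal{F}_{g-1,L}$ has degree $-1$) via Proposition~\ref{auto fr m to m+1}, and notes that \emph{if} $L$ lies in some $V$ of dimension $g-1$ then your extension argument works. For general $L$ it argues by contradiction: a destabilizing line subbundle $E\subset \mathcal{F}_{g-1,L}$ has $\deg E\geq 0$, so one can choose a sub-line-bundle $E'\subseteq E$ of degree exactly $-1$; by Theorem~\ref{pl iso to pic} every degree $-1$ line bundle on $C$ is of the form $\mathcal{F}_{g-1,V}$ for some $V\in\mathcal{H}_{g-1}$, so $\mathrm{Hom}(\mathcal{F}_{g-1,V},\mathcal{F}_{g-1,L})\neq 0$, and Corollary~\ref{cor of compare} then forces $L\subset V$. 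This \emph{produces} the maximal subspace $V$ containing $L$ that your argument presupposes, and the first case then yields stability, contradicting the assumed instability. You should add this reduction; the rest of your proposal (fixed determinant via Corollary~\ref{fix determinant}, closed embedding via Proposition~\ref{stable then closed embedding}, and surjectivity from equality of dimensions plus irreducibility of $SU_C(2,h_m)$) is the same as the paper's and is fine.
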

\begin{proof}
    Since the degree of line bundle $d_{m,g-1}$ is $g-2-m$, we have $d_{m,g-2}=d_{m,g-1}+d_{m-1,g-1}=2g-3-2m$. We first prove $\mathcal{F}_{m,L}$ is stable and it suffices to prove that for $\mathcal{F}_{g-1,L}$ by Proposition \ref{auto fr m to m+1}. If there exists a subspace $V$ of dimension $g-1$ containing $L$, then by Proposition \ref{ext prop}, as the non-trivial extension of line bundles of degrees differ by 1, $\mathcal{F}_{g-1,L}$ is stable.

    The rank 2 bundle $\mathcal{F}_{g-1,L}$ is of degree $-1$. If $\mathcal{F}_{g-1,L}$ is not stable, let $E\subset \mathcal{F}_{g-1,L}$ be a sub line bundle with $\mathrm{deg}(E)\geq 0$, we may find a suitable line bundle  $E'\subseteq E$ with $\mathrm{deg}(E')=-1$. By Theorem \ref{pl iso to pic}, $E'=\mathcal{F}_{g-1,V}$ for some subspace $V$ of dimension $g-1$. In particular, $\mathrm{Hom}(\mathcal{F}_{g-1,V},\mathcal{F}_{g-1,L})\ne 0$, which implies $L\subset V$ by Corollary \ref{cor of compare}.  Then by the above argument we know $\mathcal{F}_{2,L}$ is stable, which is a contradiction. So the bundles $\mathcal{F}_{m,L}$ are all stable.

    The determinant of $\mathcal{F}_{m,L}$ is fixed by Corollary \ref{fix determinant}, so $\alpha^{g-2}_m$ in fact induce a closed embedding into $SU_C(2,h_m)$. Since $\mathcal{H}_{g-2}$ is of the same dimension $3(g-1)$ as that of $SU_C(2,h_m)$ (\cite[Theorem 2.6]{reid1972complete}), we conclude by the irreducibility of $SU_C(2,h_m)$ that $\alpha_m^{g-2}$ is an isomorphism.
\end{proof}

\subsection{The case $g=3$}
Let $g$ be $3$, $Y$ is a smooth intersection of quadrics in $\mathbb{P}^7$. As in table \ref{table}, we have seen that $\mathcal{F}_{m,V}$ and $\mathcal{F}_{m,L}$ are stable. To prove that $\mathcal{F}_{3,p}$ is also a stable vector bundle of rank $4$ when $g=3$, we begin with a description of the VMRT of $Y$.
\begin{lemma}\label{VMRT of Y}
For each closed point $p\in Y$, the VMRT of $Y$ at $p$ is isomorphic to the intersection of two quadrics in $\mathbb{P}^4$.
\end{lemma}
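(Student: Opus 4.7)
The plan is to identify the VMRT at $p$ with the Hilbert scheme of lines in $Y$ through $p$ and then parametrize this scheme explicitly inside $\mathbb{P}(T_p\mathbb{P}^7)\cong\mathbb{P}^6$. Since $Y$ is a smooth Fano fivefold of index $2g-2=4$ with hyperplane class $H$ generating the Picard group, the minimal rational curves on $Y$ are lines in the $\mathbb{P}^7$-embedding. Thus the VMRT of $Y$ at $p$ is the image of the Hilbert scheme of lines through $p$ under the tangent direction map $\ell\mapsto T_p\ell\subset\mathbb{P}(T_pY)$, and two distinct lines through $p$ have distinct tangent directions, so this map is a closed embedding at the level of points.

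Next, I would parametrize lines through $p$ in $\mathbb{P}^7$ by directions $[v]\in\mathbb{P}(T_p\mathbb{P}^7)=\mathbb{P}^6$, writing $\ell_v=\{[p+tv]\}$. For $\ell_v$ to lie on a quadric $Q_i=\{q_i=0\}$ through $p$, the expansion
\[
q_i(p+tv)=q_i(p)+2t B_i(p,v)+t^2 q_i(v)=2t B_i(p,v)+t^2 q_i(v)
\]
must vanish identically, giving one linear condition $B_i(p,v)=0$ and one quadratic condition $q_i(v)=0$ for each $i=1,2$, where $B_i$ is the symmetric bilinear form polarizing $q_i$.

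The two linear forms $B_1(p,-)$ and $B_2(p,-)$ are linearly independent: otherwise the embedded tangent hyperplanes $T_pQ_1$ and $T_pQ_2$ would coincide, forcing $T_pY=T_pQ_1\cap T_pQ_2$ to have dimension $6$ instead of $5$, contradicting smoothness of $Y$ at $p$. Hence the two linear conditions cut out a codimension $2$ linear subspace $\mathbb{P}^4\subset\mathbb{P}^6$, canonically identified with $\mathbb{P}(T_pY)$. Restricting $q_1$ and $q_2$ to this $\mathbb{P}^4$ then exhibits the VMRT at $p$ as the intersection of two quadrics in $\mathbb{P}^4$, as claimed.

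The only nontrivial verification is the smoothness argument guaranteeing linear independence of $B_1(p,-)$ and $B_2(p,-)$, together with the identification of the Hilbert scheme of lines through $p$ with its image in $\mathbb{P}(T_pY)$; once these are in place, the parametric description above is a direct calculation. No scheme-theoretic subtleties arise because the embedding into $\mathbb{P}(T_pY)$ is induced by an explicit linear projection, so the scheme structure coming from the Hilbert scheme agrees with the one cut out by the two restricted quadratic forms.
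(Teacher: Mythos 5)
Your proof is correct and follows essentially the same route as the paper: both identify lines through $p$ with directions in $\mathbb{P}^6$, impose for each $Q_i$ the tangent-hyperplane (linear) condition and the quadratic condition, and use smoothness of $Y$ at $p$ to see the two hyperplanes meet transversally in a $\mathbb{P}^4$. Your version merely makes the paper's argument more explicit by writing out the expansion $q_i(p+tv)$ and spelling out why the VMRT coincides with the Fano scheme of lines through $p$.
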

\begin{proof}
   Let $Y=Q_1\cap Q_2\subset \mathbb{P}^7$, where $Q_i(i=1,2)$ is a quadric in $\mathbb{P}^7$. If we identify the variety of lines passing through $p$ in $\mathbb{P}^7$ as $\mathbb{P}^6$, then the VMRT of $Q_i$ is isomorphic to a 4-dimensional quadric ${Q_i}'\subset H_i \subset \mathbb{P}^6$, where $H_i$ is the variety of lines passing through $p$ contained in the tangent space of $Q_i$ at $p$, which is isomorphic to a hyperplane in $\mathbb{P}^6$. Since the intersection of $Q_i$ is transversely, $H_1\cap H_2$ is isomorphic to $\mathbb{P}^4$. The VMRT of $Y$ at $p$ is isomorphic to the intersection of the two quadrics ${Q_i}'\cap H_1\cap H_2(i=1,2)$ in $H_1\cap H_2$.
\end{proof}

\begin{proposition}\label{g3 and rk4 stable}
    For any point $p\in Y$, $\mathcal{F}_{3,p}$ is stable. 
\end{proposition}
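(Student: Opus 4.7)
The plan is to suppose for contradiction that $\mathcal{F}_{3,p}$ is not stable, let $E \subset \mathcal{F}_{3,p}$ be a saturated destabilizing subbundle (so $\mu(E) \geq \mu(\mathcal{F}_{3,p}) = -1$), and derive a contradiction by cases on $r := \mathrm{rk}(E) \in \{1,2,3\}$. The central input is the family of short exact sequences
\[
0 \to \mathcal{F}_{3,L} \to \mathcal{F}_{3,p} \to \mathcal{F}_{2,L} \to 0
\]
provided by Proposition~\ref{ext prop}, one per line $L \subset Y$ through $p$; by Theorem~\ref{stab bd for line} the flanking bundles are stable of slopes $-3/2$ and $-1/2$ respectively, and by Lemma~\ref{VMRT of Y} the parameter space $\mathrm{VMRT}_p$ is a $2$-dimensional rational variety (del Pezzo of degree $4$). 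The case $r=3$ reduces to $r=1$ by duality: the saturated rank-$3$ subbundle has a line-bundle quotient of degree $\leq -1$, which under the isomorphism $\mathcal{F}_{3,p}^\vee \cong \tau^*\mathcal{F}_{3,p} \otimes M_S$ of Proposition~\ref{the dual of the vector bundes}---noting that the normalization $d_{1,2}=0$ forces $\deg M_S = 2$---produces a line subbundle of $\mathcal{F}_{3,p}$ of degree $\geq -1$.

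For $r=1$, stability of $\mathcal{F}_{3,L}$ forces $\mathrm{Hom}(E,\mathcal{F}_{3,L})=0$, so $E$ injects into $\mathcal{F}_{2,L}$; stability of $\mathcal{F}_{2,L}$ then pins $\deg E=-1$. By Theorem~\ref{pl iso to pic}, $E\cong \mathcal{F}_{2,V}$ for a unique plane $V\subset Y$. Since also $\mathrm{Hom}(\mathcal{F}_{2,V},\mathcal{F}_{3,L})=0$ by the same slope comparison, applying $\mathrm{Hom}(\mathcal{F}_{2,V},-)$ to the SES injects $\mathrm{Hom}(\mathcal{F}_{2,V},\mathcal{F}_{3,p})$ into $\mathrm{Hom}(\mathcal{F}_{2,V},\mathcal{F}_{2,L})$, forcing the latter to be nonzero for \emph{every} line $L\subset Y$ through $p$. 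By Corollary~\ref{cor of compare} this forces every such $L$ to lie in $V$, which is impossible because lines through $p$ in the plane $V$ form a $\mathbb{P}^1$ while $\dim \mathrm{VMRT}_p = 2$.

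For $r=2$, set $r'(L) := \mathrm{rk}(E\cap \mathcal{F}_{3,L})$. If $r'(L)=2$ for some $L$, then $E=\mathcal{F}_{3,L}$ has degree $-3$; if $r'(L)=1$ for some $L$, then the induced SES expresses $E$ as an extension of a line subsheaf of $\mathcal{F}_{2,L}$ (degree $\leq -1$ by stability) by a line subsheaf of $\mathcal{F}_{3,L}$ (degree $\leq -2$ by stability), yielding $\deg E \leq -3$; both contradict $\deg E \geq -2$. So $r'(L)=0$ for every $L$, and $E \hookrightarrow \mathcal{F}_{2,L}$ is a rank-$2$ inclusion, forcing $\deg E \in \{-2,-1\}$. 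If $\deg E=-1$ then $E\cong \mathcal{F}_{2,L}$ for every $L\in \mathrm{VMRT}_p$, contradicting the injectivity of $\alpha^{g-2}_2$ from Theorem~\ref{stab bd for line}. If $\deg E=-2$, then each $\mathcal{F}_{2,L}$ is an elementary modification of the fixed bundle $E$ at some point $q_L\in C$; the map $L\mapsto q_L$ from the rational surface $\mathrm{VMRT}_p$ into the genus-$3$ curve $C$ must be constant, so the image of $\mathrm{VMRT}_p$ under $\alpha^{g-2}_2$ is confined to the $\mathbb{P}^1$-family of elementary modifications of $E$ at a fixed point $q$, again contradicting that $\alpha^{g-2}_2$ is a closed embedding of a $2$-dimensional variety. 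The main obstacle is precisely this $\deg E=-2$ sub-case, where only the combination of the rigidity of a rational surface mapping to a positive-genus curve with the $\mathbb{P}^1$-parameter description of elementary modifications of a fixed rank-$2$ bundle rules out the configuration; all other sub-cases reduce to direct slope inequalities.
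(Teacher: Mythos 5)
Your argument is correct in substance and, for the rank-$1$ and rank-$2$ cases, follows essentially the same route as the paper: the same exact sequences $0\to\mathcal{F}_{3,L}\to\mathcal{F}_{3,p}\to\mathcal{F}_{2,L}\to 0$ from Proposition~\ref{ext prop}, the same slope bookkeeping against the stable bundles $\mathcal{F}_{3,L}$ and $\mathcal{F}_{2,L}$, and the same final contradiction with the fact that the VMRT at $p$ is $2$-dimensional. Two points of comparison. First, your treatment of rank $3$ by dualizing via Proposition~\ref{the dual of the vector bundes} (with $\deg M_S=2$, which does follow from the normalization $d_{1,2}=0$) is genuinely different from, and shorter than, the paper's Case II, which instead intersects the rank-$3$ subsheaf with $\mathcal{F}_{3,L}$ and runs a sub-case analysis ending in another ``choose $L$ not contained in $V$'' argument; your reduction buys the rank-$3$ case for free once rank $1$ is settled. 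Second, in rank $2$ you work with the inclusion $E\hookrightarrow\mathcal{F}_{2,L}$ and its length-one cokernel $\oh_{q_L}$, whereas the paper works with the quotient $G=\mathcal{F}_{3,p}/E$ and the induced sequence $0\to\mathcal{F}_{3,L}\to G\to\oh_x\to 0$; these are mirror images of one another via the $3\times 3$ diagram, and both arguments confine the image of a $2$-dimensional family under an injective morphism to a $\mathbb{P}^1$ of Hecke modifications at a fixed point.

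The one step you should tighten is the constancy of $q_L$ in the $\deg E=-2$ sub-case. You invoke rigidity of maps from a rational surface to a positive-genus curve, but you have not shown that $L\mapsto q_L$ is a morphism (that would require a relative construction over the universal family of lines). It is cleaner, and is what the paper does, to observe that $\oh_C(q_L)\cong\det(\mathcal{F}_{2,L})\otimes\det(E)^{-1}=h_2\otimes\det(E)^{-1}$ is a \emph{fixed} line bundle of degree $1$ on the genus-$3$ curve $C$ by Corollary~\ref{fix determinant}, hence admits at most one effective divisor in its linear system, so $q_L$ is a fixed point $q$. Two further cosmetic corrections: in the sub-case $r'(L)=2$ the conclusion is $E\subseteq\mathcal{F}_{3,L}$ and hence $\deg E\leq -3$ (not $E=\mathcal{F}_{3,L}$), which still contradicts $\deg E\geq -2$; and in the $\deg E=-1$ sub-case your appeal to injectivity of $\alpha^1_2$ is fine, though the paper's observation that an isomorphism $E\cong\mathcal{F}_{2,L}$ would split the non-trivial extension of Proposition~\ref{ext prop} is even more direct.
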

\begin{proof}
For any point $p\in Y$ and any line $L$ passes through it, we have a non-trivial extension by Proposition \ref{ext prop} 
$$0\rightarrow \mathcal{F}_{3,L}\rightarrow \mathcal{F}_{3,p}\rightarrow \mathcal{F}_{2,L}\rightarrow 0,$$
from which we calculate that $r(\mathcal{F}_{3,p})=4,\mathrm{deg}(\mathcal{F}_{3,p})=-4$.

If $\mathcal{F}_{3,p}$ is not stable, let $0\rightarrow E \rightarrow \mathcal{F}_{3,p} \rightarrow G \rightarrow 0$ be a destablizing exact sequence, we have the following possibilities:
%\begin{itemize}
%    \item either $r(E)=1$ and $\mathrm{deg}(E)\geq -1$,
%    \item or $r(E)=2$ and $\mathrm{deg}(E)\geq -2$,
%    \item or $r(E)=3$ and $\mathrm{deg}(E)\geq -3$.
%\end{itemize}

\textbf{Case I}: $r(E)=1$ and $\mathrm{deg}(E)\geq -1$.

For the first case, the composite $E\rightarrow \mathcal{F}_{3,p} \rightarrow \mathcal{F}_{2,L}$ does not vanish, otherwise we would have an inclusion $E\hookrightarrow \mathcal{F}_{3,L}$, contradicting to $\mu(E)>\mu(\mathcal{F}_{3,L})$ and the stability of $\mathcal{F}_{3,L}$. Hence we get an inclusion $E\hookrightarrow \mathcal{F}_{2,L}$ and $\deg(E)=-1$ due to the stability of $\mathcal{F}_{2,L}$. By Theorem \ref{pl iso to pic}, $E$ is isomorphic to $\mathcal{F}_{2,V}$ for some plane $V$. From Lemma \ref{VMRT of Y}, the VMRT at $p$ is of dimension at least $2$, we can choose a line $L$ passing through $p$ that is not contained in $V$.  However, by Corollary \ref{cor of compare} $\mathrm{Hom}(E,\mathcal{F}_{2,L})$ vanishes, which contradicts $E\hookrightarrow \mathcal{F}_{2,L}$.

\textbf{Case II}: $r(E)=3$ and $\mathrm{deg}(E)\geq -3$.

Let $E_2$ be the image of $E$ via the morphism $\mathcal{F}_{3,p}\rightarrow \mathcal{F}_{2,L}$, and $E_1$ be the sheaf in the exact sequence $0\rightarrow E_1 \rightarrow E \rightarrow E_2 \rightarrow 0$. Let $G_i$ be the quotient $\mathcal{F}_{4-i,L}/E_i$, we have the diagram
 \[
\begin{tikzcd}
  0 \arrow[r] & E_1 \arrow[hookrightarrow]{d} \arrow[hookrightarrow]{r} & E\ar[hookrightarrow]{d} \arrow[r] & E_2\ar[hookrightarrow]{d} \arrow[r] & 0 \\
  0 \arrow[r] & \mathcal{F}_{3,L} \arrow[r]\ar[d] & \mathcal{F}_{3,p} \arrow[r]\ar[d] & \mathcal{F}_{2,L} \ar[r]\ar[d] & 0\\
  0 \ar[r] & G_1 \ar[r] &G  \ar[r]& G_2 \ar[r] &0.
\end{tikzcd}
\] 
If $r(E_1)=2,~r(E_1)=1$, we have $\deg(E_1)\leq -3,\deg(E_2)\leq -1$ by the stability of $\mathcal{F}_{2,L},\mathcal{F}_{3,L}$.Then we have $\deg(E)=\deg(E_1)+\deg(E_2)\leq -4$, which is a contradiction. If $r(E_1)=1,~r(E_2)=2$, the only possibility is $\deg(E_1)=-2,~\deg(E_2)=-1$. Then $G_2=0$ and $G_1\cong G$. Since the determinant of $\mathcal{F}_{3,L}$ is fixed (Corollary \ref{fix determinant}), $E_1\cong \det(\mathcal{F}_{3,L})\otimes \det(G)^{-1}$ is a fixed line bundle of degree $-2$, which is isomorphic to $\mathcal{F}_{3,V}$ for some fixed plane $V$ by Theorem \ref{pl iso to pic}. We can choose a line $L$ passing through $p$ that is not contained in $V$, contradicting $E_1\hookrightarrow \mathcal{F}_{3,L}$ as above. 

\textbf{Case III}: $r(E)=2$ and $\mathrm{deg}(E)\geq -2$.

Since $\mathcal{F}_{3,p}$ has no destablizing subobject of rank $1$ by \textbf{Case I}, $E$ is a stable vector bundle. The composite $E\rightarrow \mathcal{F}_{3,p} \rightarrow \mathcal{F}_{2,L}$ does not vanish as $\mu(E)>\mu(\mathcal{F}_{3,L})$, so it is an inclusion. If $\deg(E)$ is $-1$, the composite will be an isomorphism, which is impossible since the extension \ref{ext prop} is non-trivial. Therefore, the degree $\deg(E)$ is equal to $-2$. The sheaf $G$ is also a stable bundle: if there exists a destabilizing quotient $G\rightarrow G'$, let $E'$ be the kernel of $\mathcal{F}_{3,p} \rightarrow G'$. Either $r(E')=3,~\deg(E')\geq -3$ or $r(E')=2,~\deg(E')\geq -1$, we reduce to the above cases. 

Now for each line $L$ containing $p$, the composite $E\hookrightarrow \mathcal{F}_{2,L}$ is an inclusion and we have the following diagram
 \[
\begin{tikzcd}
  0 \arrow[r] & 0 \arrow[rightarrow]{d} \arrow[r] & E\ar[d]\arrow[r] & E\ar[hookrightarrow]{d} \arrow[r] & 0 \\
  0 \arrow[r] & \mathcal{F}_{3,L} \arrow[d]\arrow[r] & \mathcal{F}_{3,p} \ar[d]\arrow[r] & \mathcal{F}_{2,L} \ar[r]\ar[d] & 0\\
  0 \ar[r] &\mathcal{F}_{3,L} \ar[r] &G \ar[r] &\oh_{x} \ar[r] &0,
\end{tikzcd}
\]
The quotient of $G$ by $\mathcal{F}_{3,L}$ is a skyscraper sheaf $\oh_x$ for some closed point $x\in C$. Since $\det(\mathcal{F}_{3,L})$ is fixed by Corollary \ref{fix determinant} and $\oh_C(x)$ is isomorphic to $ \det(G)\otimes \det(\mathcal{F}_{3,L})^{-1}$, the point $x$ is fixed, that is, it does not depend on the choice of $L$. Let's denote the VMRT of $Y$ at $p$ by $M$ and denote the projective bundle $\pi:\mathbb{P}_C(G)\rightarrow C$. We claim that 
\begin{claim}\label{claim to contradiction}
    The above diagram will induce a morphism $\psi:\mathbb{P}_C(G)\rightarrow U_C^s(2,-3)$. If we view $M$ as a closed subvariety of $\mathcal{H}_1$, we have $\alpha_3^1(M)\subseteq \psi(\pi^{-1}(x))$.
\end{claim}
The claim contradicts that $\dim(M)$ is at least $2$, so the \textbf{Case III} is ruled out.

\begin{proof}[Proof of the claim]
    We first construct a morphism $\psi:\mathbb{P}_C(G)\rightarrow U_C^s(2,-3)$ by elementary transformation. For each point $l\in \mathbb{P}_C(G)$ lying above $c\in C$, we have the elementary transformation $E_{l}G$ defined by 
    $0\rightarrow E_lG \rightarrow G \rightarrow G_c/l \otimes \oh_c \rightarrow 0$. 
    As $G$ is a rank $2$ stable bundle of degree $-2$, every line bundle contained in $E_lG$ is of degree less than $-1$, which means $E_{l}G$ is again stable of degree $-3$. Consider $\mathbb{P}_C(G)\times C$ and we denote
    $C\xleftarrow{\pi}\mathbb{P}_C(G)\xleftarrow{pr_1}\mathbb{P}_C(G)\times C \xrightarrow{pr_2}C$. 
    The morphism $\gamma:=(id_{\mathbb{P}_C(G)},\pi):\mathbb{P}_C(G)\rightarrow \Gamma \subset \mathbb{P}_C(G)\times C$ embeds $\mathbb{P}_C(G)$ as a divisor $\Gamma$ in $\mathbb{P}_C(G)\times C$, we get a canonical surjection
    $$pr_2^*G\rightarrow pr_2^*G\otimes \oh_{\Gamma}\cong pr_1^{*}\pi^*G\otimes \oh_{\Gamma} \rightarrow pr_1^*(\oh_{\mathbb{P}_C(G)}(1))\otimes \oh_{\Gamma}$$
    and the kernel of it restricts to $E_lG$ on $\{l\}\times C $, inducing a morphism $\psi:\mathbb{P}_C(G)\rightarrow U^s_C(2,-3)$. 

    By Theorem \ref{stab bd for line}, we have an embedding of the VMRT $M$ of $Y$ at $p$
    $$\alpha_3^{1}|_M:M\rightarrow U_C^s(2,-3),~L\mapsto \mathcal{F}_{3,L}.$$
    The diagram means that every $\mathcal{F}_{3,L}$ comes from an elementary transformation over the point $x\in C$, i.e. $\alpha_3^1(M)\subseteq \psi(\pi^{-1}(x))$.
\end{proof}

\end{proof}

\begin{theorem}\label{Y as locus in rk4 moduli}
    Let $Y$ be a smooth intersection of quadrics in $\mathbb{P}^7$, we have a closed embedding
    $\alpha^0:Y\rightarrow SU^s_C(4,h),~p\mapsto \mathcal{F}_{3,p}$, where $h=\det(\mathcal{F}_{3,p})$ is a fixed line bundle of degree $-4$.
\end{theorem}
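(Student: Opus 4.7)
My plan is to split the statement into three parts and handle each using results already established in the paper: (i) stability of $\mathcal{F}_{3,p}$ for every $p\in Y$, (ii) constancy of $\det(\mathcal{F}_{3,p})$ as $p$ varies, and (iii) upgrading the set-theoretic map $p\mapsto \mathcal{F}_{3,p}$ to a closed embedding into $SU^{s}_{C}(4,h)$. Claim (i) is precisely Proposition \ref{g3 and rk4 stable}. Feeding this into Proposition \ref{stable then closed embedding} with $l=0$ and $m=3$, and recalling that $\mathcal{H}_{0}=Y$, already yields a closed embedding $\alpha^{0}_{3}:Y\hookrightarrow U^{s}_{C}(4,-4)$ into the moduli space of stable bundles of rank $4$ and degree $d_{3,0}=-4$, the latter value being an immediate consequence of the degree recursions recorded at the start of Section~\ref{section_projection_induce_morphism}. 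Part (iii) will then follow formally from (ii) by composing with the closed immersion $SU^{s}_{C}(4,h)\hookrightarrow U^{s}_{C}(4,-4)$.

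The heart of the argument is therefore (ii), where I expect the only real work. For a fixed $p\in Y$, Lemma \ref{VMRT of Y} guarantees a line $\ell\subset Y$ through $p$, since the VMRT of $Y$ at $p$ is a smooth $2$-dimensional intersection of two quadrics in $\mathbb{P}^{4}$, hence nonempty. Applying Proposition \ref{ext prop} with $V=\ell$ of dimension $l=1$, $L=\{p\}$ of dimension $0$, and $m=3$, which is the unique value in the admissible range $\{2g-2-l,\ldots,2g-3\}=\{3\}$, produces the short exact sequence
\[
0\longrightarrow \mathcal{F}_{3,\ell}\longrightarrow \mathcal{F}_{3,p}\longrightarrow \mathcal{F}_{2,\ell}\longrightarrow 0,
\]
and taking determinants yields $\det(\mathcal{F}_{3,p})\cong \det(\mathcal{F}_{3,\ell})\otimes \det(\mathcal{F}_{2,\ell})$. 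Since lines in $Y$ are exactly the linear subspaces of dimension $g-2=1$, Theorem \ref{stab bd for line} together with Corollary \ref{fix determinant} tells me that both $\det(\mathcal{F}_{3,\ell})$ and $\det(\mathcal{F}_{2,\ell})$ are fixed line bundles on $C$ independent of the choice of $\ell$, of degrees $-3$ and $-1$ respectively. Hence $h:=\det(\mathcal{F}_{3,\ell})\otimes \det(\mathcal{F}_{2,\ell})$ is a single fixed line bundle on $C$ of degree $-4\equiv 0\pmod{4}$, and $\det(\mathcal{F}_{3,p})\cong h$ for every $p\in Y$, proving (ii).

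To conclude, the image of the closed embedding $\alpha^{0}_{3}$ is set-theoretically contained in the fibre $SU^{s}_{C}(4,h)$ of the determinant morphism $U^{s}_{C}(4,-4)\to\mathrm{Pic}^{-4}(C)$ over $[h]$; as $Y$ is reduced this factorisation is automatically scheme-theoretic. Composing $\alpha^{0}_{3}$ with the closed immersion $SU^{s}_{C}(4,h)\hookrightarrow U^{s}_{C}(4,-4)$ then produces the desired closed embedding $\alpha^{0}:Y\hookrightarrow SU^{s}_{C}(4,h)$. The only step that looked like a genuine obstacle was pinning down the determinant, but the extension trick from Proposition \ref{ext prop} reduces it cleanly to the already-treated case of lines, so all the structural input is already on the table.
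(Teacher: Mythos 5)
Your proposal is correct and follows essentially the same route as the paper: stability via Proposition \ref{g3 and rk4 stable}, the closed embedding via Proposition \ref{stable then closed embedding}, and constancy of the determinant by taking determinants in the extension of Proposition \ref{ext prop} combined with Corollary \ref{fix determinant}. You have merely written out in detail the determinant computation that the paper leaves implicit.
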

\begin{proof}
    By Proposition \ref{ext prop} and Corollary \ref{fix determinant}, we see that $\mathcal{F}_{3,p}$ is also of fixed determinant. Then we apply Proposition \ref{stable then closed embedding} and Proposition \ref{g3 and rk4 stable}.
\end{proof}

\section{Further questions: stability of projection and Brill-Noether conditions}
\label{section_seek_BN_conditions}
If $Y:=Q_1\cap Q_2$ is a smooth del Pezzo threefold of degree $4$, then the associate hyperelliptic curve $C$ is of genus two and $Y\cong\{F\in\mathcal{M}_C(2,1):\mathrm{hom}(F,\mathcal{R})\geq 1\}$, where $\mathcal{L}$ is a degree one line bundle over $C$ and 
$\mathcal{R}=\Phi^{!}(\oh_{Y_4})[-1]$ is a second Raynaud bundle on the hyperelliptic curve $C$. To show this, first we prove stability of projection of skyscraper sheaf of each point of $Y$ into the Kuznetsov component $\Ku(Y)\simeq D^b(C)$. Then we can show projection of skyscraper sheaf of each point of $Y$ satisfies the \emph{Brill-Noether} condition: $\mathrm{Hom}(\mathrm{pr}(\oh_y),\Phi^{!}(\oh_Y))=k^5$. Finally, we make use of results in \cite{rota} to conclude (see \cite[Section 6.1]{feyzbakhsh23torelli}).  Then the following question is very natural for general case. 

\begin{quest}
    Is $\mathcal{F}_{m,V}$ a stable bundle for each $l$ dimensional subspace $V$ and each integer $m(2g-3-l\leq m \leq 2g-3)$? If the answer is affirmative, can we describe the Brill-Noether conditions of $\alpha^l_m(\mathcal{H}_l)$ in $U^s_C(2^{g-1-l},d_{m,l})$?
\end{quest}
We have seen that $\mathcal{F}_{3,p}$ is a stable rank $4$ bundle for each point $p\in Y$ when $g=3$. One can show the rank $4$ bundle $\mathcal{F}_{m,P}$ is semistable for a $g-3$ dimensional subspace $P$ if there's a $g-2$ subspace $L$ with $P\subset L$ when $g>3$.
\begin{proposition}\label{containing in big space then rk4 stable}
    Assume $g\geq 3$. Let $P$ be a $g-3$ dimensional subspace in $Y$. If there's a $g-2$ dimensional subspace with $P\subset L$, then $\mathcal{F}_{m,P}$ is semistable for $g\leq m \leq 2g-3$.
\end{proposition}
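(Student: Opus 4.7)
The plan is to realize $\mathcal{F}_{m,P}$ as a non-trivial extension of two stable rank~$2$ bundles, then run a short case analysis on ranks of subsheaves to rule out strict $\mu$-destabilizations.

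First I would apply Proposition~\ref{ext prop} to the pair $P \subset L$ with $\dim L = g-2$ and $\dim P = g-3$; for $g \le m \le 2g-3$ it gives the non-trivial short exact sequence
$$0 \to \mathcal{F}_{m,L} \to \mathcal{F}_{m,P} \to \mathcal{F}_{m-1,L} \to 0.$$
By Theorem~\ref{stab bd for line} the outer terms are stable rank~$2$ bundles on $C$, with slopes $\mu(\mathcal{F}_{m,L}) = g - \tfrac{3}{2} - m$ and $\mu(\mathcal{F}_{m-1,L}) = g - \tfrac{1}{2} - m$; additivity of rank and degree then yields $\mu(\mathcal{F}_{m,P}) = g - 1 - m$, the average of the two.

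Given any subsheaf $E \subset \mathcal{F}_{m,P}$, I would set $E_1 := E \cap \mathcal{F}_{m,L}$ and let $E_2$ be the image of $E$ in $\mathcal{F}_{m-1,L}$, so that $0 \to E_1 \to E \to E_2 \to 0$ with $E_2$ torsion-free (a subsheaf of a vector bundle on a smooth curve). Stability of $\mathcal{F}_{m,L}$ and $\mathcal{F}_{m-1,L}$ bounds the slopes of $E_1$ and $E_2$ in terms of the ambient slopes, strictly so whenever $E_i$ is a proper subsheaf of smaller rank. A finite case analysis over the rank pair $(r(E_1), r(E_2)) \in \{0,1,2\}^2$ then produces $\mu(E) \le g-1-m$ in every case except possibly one.

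The one delicate case, which I expect to be the only real obstacle, is $r(E_1) = 0$ with $E_2 = \mathcal{F}_{m-1,L}$: then $E$ is a rank~$2$, full-degree subsheaf of $\mathcal{F}_{m-1,L}$, hence $E \cong \mathcal{F}_{m-1,L}$, and the composition $E \hookrightarrow \mathcal{F}_{m,P} \twoheadrightarrow \mathcal{F}_{m-1,L}$ is an isomorphism. This provides a section of the quotient, splitting the extension and contradicting the non-triviality asserted by Proposition~\ref{ext prop}. Once this is excluded, $\mathcal{F}_{m,P}$ is $\mu$-semistable. Note that strict stability genuinely fails in general: when $r(E)=3$ with $r(E_1)=1$ and $E_2=\mathcal{F}_{m-1,L}$ the bounds become equalities and give $\mu(E)=\mu(\mathcal{F}_{m,P})$ without contradiction, so semistability is the sharp statement.
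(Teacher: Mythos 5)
Your proposal is correct and follows essentially the same route as the paper: project the extension $0\to\mathcal{F}_{m,L}\to\mathcal{F}_{m,P}\to\mathcal{F}_{m-1,L}\to 0$ from Proposition~\ref{ext prop}, use the stability of the two rank-$2$ pieces from Theorem~\ref{stab bd for line} to bound $\mu(E_1)$ and $\mu(E_2)$, and rule out the one remaining case by noting that $E\cong\mathcal{F}_{m-1,L}$ would split the non-trivial extension. The only cosmetic difference is that the paper first reduces to $m=g$ via Proposition~\ref{auto fr m to m+1} and reads off the numerical data from the $g=3$ table, whereas you carry the general slopes $g-\tfrac{3}{2}-m$ and $g-\tfrac{1}{2}-m$ throughout; your closing remark on why only semistability is obtained matches the equality case implicit in the paper's $r(E)=3$ analysis.
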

\begin{proof}
    By Proposition \ref{auto fr m to m+1}, we only need to show $\mathcal{F}_{g,P}$ is semistable and we prove by contradiction. Let $E\hookrightarrow \mathcal{F}_{g,P}$ be a subbundle with $\mu(E)>\mu(\mathcal{F}_{g,P})$ and let $E_1,E_2$ be the sheaves in the diagram
    \[
    \begin{tikzcd}
        0 \arrow[r] & E_1 \arrow[hookrightarrow]{d} \arrow[hookrightarrow]{r} & E\ar[hookrightarrow]{d} \arrow[r] & E_2\ar[hookrightarrow]{d} \arrow[r] & 0 \\
  0 \arrow[r] & \mathcal{F}_{g,L} \arrow[r] & \mathcal{F}_{g,P} \arrow[r] & \mathcal{F}_{g-1,L} \ar[r] & 0.
    \end{tikzcd}
    \]
    Under the assumption $d_{g-2,g-1}=0$, the ranks and degrees of $\mathcal{F}_{g,L},\mathcal{F}_{g,P},\mathcal{F}_{g-1,L}$ are the same as that of $\mathcal{F}_{3,L},\mathcal{F}_{3,p},\mathcal{F}_{2,L}$ respectively as in \ref{table}. By Theorem \ref{stab bd for line}, we have $\mu(E_1)\leq \mu(\mathcal{F}_{g,L})=-\frac{3}{2},~ \mu(E_2)\leq \mu(\mathcal{F}_{g-1,L})=-\frac{1}{2}$ and $-1<\mu(E)<0$.

    If $r(E)=1$, then $\deg(E)\leq -1=\mu(\mathcal{F}_{g,P})$, which is impossible. If $r(E)=2$, then the only possibility is that $E_1=0$ and $E\cong E_2\cong \mathcal{F}_{g-1,L}$, contradicting to the fact that the extension \ref{ext prop} is non-trivial. If $r(E)=3$, either $r(E_1)=1,r(E_2)=2$ or $r(E_1)=2,r(E_1)=1$. We have $\deg(E_1)\leq -2,\deg(E_2)\leq -1$ in the former case and $\deg(E_1)\leq -3,\deg(E_2)\leq -1$ in the latter case. Then $\deg(E)=\deg(E_1)+\deg(E_2)\leq -3$ in both cases and we get a contradiction.
\end{proof}

Next we make an attempt to seek for Brill-Noether conditions for $\mathcal{F}_{m,V}$. For general case, we consider $\mathcal{R}_d:=\Phi^!(\oh_Y(d))[-1],0\leq d\leq 2g-3$. 
\begin{proposition}\label{prop_BN_condition}
    For $0\leq d\leq 2g-3$, $\dim(V)=l$, we have
    $$
    \mathrm{ext}^1(\mathcal{F}_{m,V},\mathcal{R}_d)=\begin{cases}
        \dbinom{l+d}{d}&,m=2g-3-l\\
       0&, \text{~otherwise}
    \end{cases}
    .$$
\end{proposition}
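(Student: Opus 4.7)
The plan is to translate the Ext computation on $C$ into one on $Y$ via the $(\Phi,\Phi^!)$-adjunction, to directly compute the contribution coming from $\oh_V(m)$ using Serre duality, and to show that the left mutations which project $\oh_V(m)$ into $\Ku(Y)$ contribute nothing in cohomological degrees $m+1$ and $m+2$.

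\textbf{Reduction via adjunction.} First I would use the adjunction $(\Phi,\Phi^!)$ together with the identity $\Phi(\mathcal{F}_{m,V})=\mathbb{L}_{\oh_Y}\cdots\mathbb{L}_{\oh_Y(m)}(\oh_V(m))[-m-2]$ established in the proof of Proposition~\ref{auto fr m to m+1} to rewrite
$$\mathrm{Ext}^1(\mathcal{F}_{m,V},\mathcal{R}_d)\cong\mathrm{Hom}(\Phi(\mathcal{F}_{m,V}),\oh_Y(d))\cong\mathrm{Ext}^{m+2}(M,\oh_Y(d)),$$
where $M:=\mathbb{L}_{\oh_Y}\cdots\mathbb{L}_{\oh_Y(m)}(\oh_V(m))\in\Ku(Y)$. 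This moves the entire problem onto $Y$.

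\textbf{The main Serre duality input.} Serre duality on $Y$, the identification $\omega_Y=\oh_Y(2-2g)$, and the fact that $V\cong\mathbb{P}^l$ together give
$$\mathrm{Ext}^i(\oh_V(m),\oh_Y(d))\cong H^{2g-1-i}(V,\oh_V(m+2-2g-d))^{\vee},$$
which is concentrated in the single degree $i=2g-1-l$ with dimension $\binom{2g+d-m-3}{l}$. In our range $2g-3-l\leq m\leq 2g-3$, the group $\mathrm{Ext}^{m+2}(\oh_V(m),\oh_Y(d))$ therefore vanishes unless $m+2=2g-1-l$, i.e.\ $m=2g-3-l$, and in that case its dimension equals $\binom{l+d}{l}=\binom{l+d}{d}$.

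\textbf{Tracking the mutations.} To pass from $\mathrm{Ext}^{m+2}(M,\oh_Y(d))$ back to $\mathrm{Ext}^{m+2}(\oh_V(m),\oh_Y(d))$ I would analyse the chain of distinguished triangles
$$V_{m-j}\otimes\oh_Y(m-j)\to L_{j-1}\to L_j,\qquad V_{m-j}:=\mathrm{RHom}(\oh_Y(m-j),L_{j-1}),$$
with $L_{-1}:=\oh_V(m)$ and $L_m=M$. The key technical claim, to be proved by induction on $j$, is that each $L_j$ is a single sheaf placed in cohomological degree $-(j+1)$; equivalently, each $V_p$ is a single vector space placed in degree $-(m-p)$. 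Granting this, and noting that $\mathrm{RHom}(\oh_Y(p),\oh_Y(d))$ is concentrated in degree $0$ for $0\leq p\leq d$ and vanishes otherwise in our range, the mutation correction $\mathrm{Ext}^i(V_p\otimes\oh_Y(p),\oh_Y(d))$ lives only in degree $i=m-p\in[0,m]$ and hence vanishes for $i=m+1$ and $i=m+2$. The long exact sequences from the successive mutation triangles then yield $\mathrm{Ext}^{m+2}(M,\oh_Y(d))\cong\mathrm{Ext}^{m+2}(\oh_V(m),\oh_Y(d))$, and the Serre duality computation finishes the proof.

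\textbf{The main obstacle.} The heart of the argument is the inductive step above: one must verify at every stage that the evaluation map $V_{m-j}\otimes\oh_Y(m-j)\to L_{j-1}$ is surjective at the sheaf level, so that its cone is again a shifted sheaf with the next expected cohomological concentration. This amounts to showing that the successive linear syzygies of the ideal sheaf $\mathcal{I}_V\subset\oh_Y$ are globally generated in the predicted degrees, i.e.\ that $\mathcal{I}_V$ admits a linear resolution on $Y$. The cleanest route is probably to use the explicit Koszul resolution of the linear space $V\subset\mathbb{P}^{2g+1}$ and restrict it to $Y$, checking that the restriction stays exact by verifying the vanishing of the relevant $\mathrm{Tor}$'s with $\oh_Y$ in the cohomological range of interest, and then identifying the $V_p$ with the linear strand of this resolution.
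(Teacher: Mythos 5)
Your reduction via adjunction and the Serre-duality computation of $\mathrm{Ext}^{m+2}(\oh_V(m),\oh_Y(d))$ are exactly right and coincide with the paper's argument; the problem lies in your treatment of the mutation chain. The ``key technical claim'' that each $L_j$ is a single sheaf concentrated in degree $-(j+1)$ is a genuine gap: it is far stronger than what the comparison requires, you do not prove it, and the route you propose for it fails. Because $V\subset Y$, the two quadrics cutting out $Y$ restrict to zero on $V$, so tensoring the Koszul resolution of $\oh_Y$ with $\oh_V$ gives $\mathrm{Tor}^{\oh_{\mathbb{P}^{2g+1}}}_{1}(\oh_V,\oh_Y)\cong\oh_V(-2)^{\oplus 2}$ and $\mathrm{Tor}^{\oh_{\mathbb{P}^{2g+1}}}_{2}(\oh_V,\oh_Y)\cong\oh_V(-4)$, both nonzero. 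Hence the Koszul resolution of $V$ in $\mathbb{P}^{2g+1}$ does not restrict to a resolution on $Y$, and establishing exactness and linearity of the (necessarily infinite, since $V$ is never a complete intersection in $Y$) minimal resolution of $\mathcal{I}_V$ over $\oh_Y$ would be a substantial separate project.

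What you are missing is that no structural statement about the $L_j$ is needed beyond a one-sided amplitude bound. The paper's Lemma \ref{compare hom} shows by an easy induction that $\mathrm{Hom}(\mathcal{G}[n],L_j)=0$ for every coherent sheaf $\mathcal{G}$ and every $n>j+1$, so the correction term in the $j$-th mutation triangle only involves summands $\oh_Y(m-j)[i]$ with $i\le j$. Applying $\mathrm{Hom}(-,\oh_Y(d)[m+2])$ to that triangle, the groups one must kill are $\mathrm{Ext}^{m+2-i}(\oh_Y(m-j),\oh_Y(d))$ and $\mathrm{Ext}^{m+1-i}(\oh_Y(m-j),\oh_Y(d))$ with $m+1-i\ge m+1-j\ge 1$; these vanish for purely numerical reasons, by semiorthogonality of the exceptional collection when $m-j>d$ and by $H^{>0}(Y,\oh_Y(r))=0$ for $r\ge 0$ when $m-j\le d$. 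This yields $\mathrm{Hom}(L_{j-1},\oh_Y(d)[m+2])\cong\mathrm{Hom}(L_j,\oh_Y(d)[m+2])$ regardless of whether $L_j$ is a pure shifted sheaf. Replacing your key claim by this weaker bound turns your outline into the paper's proof.
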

\begin{proof}
    By definition, we have
    \begin{equation*}
        \begin{aligned}
            \mathrm{Ext}^1(\mathcal{F}_{m,V},R_d)&\cong\mathrm{Hom}(\Phi^{*}(\oh_V(m))[-m-2],\Phi^!(\oh_Y(d)))\\
            &\cong \mathrm{Hom}(\Phi\Phi^*(\oh_V(m)),\oh_Y(d)[m+2]).\\
        \end{aligned}
    \end{equation*}
    If we can show $\mathrm{Hom}(\Phi\Phi^{*}(\oh_V(m)),\oh_Y(d)[m+2])\cong \mathrm{Hom}(\oh_V(m),\oh_Y(d)[m+2])$, then
    \begin{equation*}
        \begin{aligned}
            \mathrm{Ext}^1(\mathcal{F}_{m,V},R_d)&\cong \mathrm{Hom}(\oh_V(m),\oh_Y(d)[m+2])\\
            &\cong \mathrm{Hom}(\oh_Y(d)[m+2],\oh_V(2-2g+m)[2g-1])^{\vee}\\
            &\cong H^{2g-3-m}(V,\oh_V(2-2g+m-d))^{\vee}
        \end{aligned}
    \end{equation*}
    and one can see that the above is $H^{l}(\oh_V(-l-1-d))^{\vee}$ when $m=2g-3-l$ and vanished when $2g-3-l<m\leq 2g-3$.

    To prove $\mathrm{Hom}(\Phi\Phi^{*}(\oh_V(m)),\oh_Y(d)[m+2])\cong \mathrm{Hom}(\oh_V(m),\oh_Y(d)[m+2])$, we use the same argument in Proposition \ref{compare hom}. We set 
    $$L_j=\mathbb{L}_{\oh_Y(m-j)}\cdots \mathbb{L}_{\oh_Y(m)}\oh_{V}(m),~L_{-1}=\oh_{V}(m),~L_m=\Phi\Phi^{*}(\oh_{V}(m)).$$
    where $0\leq j\leq m$. The left mutation is defined by
    $$\bigoplus_{i\leq j}\mathrm{Hom}(\oh_Y(m-j)[i],L_{j-1})\otimes \oh_Y(m-j)[i]\rightarrow L_{j-1} \rightarrow L_j.$$
    We prove $\mathrm{Hom}(\oh_Y(m-j)[i+1],\oh_Y(d)[m+2])=\mathrm{Hom}(\oh_Y(m-j)[i],\oh_Y(d)[m+2])=0$. 
    
    If $m-j>d$, then $\oh_Y(d)\in \oh_Y(m-j)^{\perp}$ and we're done. If $m-j\leq d$, we note that $i+1\leq j+1 \leq m+1$, and the above vanishing is from $H^{n}(\oh_Y(r))=0$ for any $n>0,r\geq 0$.

    Apply $\mathrm{Hom}(-,\oh_Y(d)[m+2])$ to the above exact triangle, we see that 
    $$\mathrm{Hom}(L_{j-1},\oh_Y(d)[m+2])\cong \mathrm{Hom}(L_j,\oh_Y(d)[m+2])$$
    implying $\mathrm{Hom}(\Phi\Phi^{*}(\oh_V(m)),\oh_Y(d)[m+2])\cong \mathrm{Hom}(\oh_V(m),\oh_Y(d)[m+2])$. 
\end{proof}

We expect the image $\mathcal{F}_{m,V}$ of twisted structure sheaf $\oh_V$ of linear subspace $V$ of intersection of two quadrics $Y$ under the projection functor $\Phi^*\cong\mathrm{pr}$ satisfy some of \emph{Brill-Noether} condition in Proposition~\ref{prop_BN_condition}. Thus the projection functor would be expected to induce a closed immersion of $\mathcal{H}_l$ into \emph{Brill-Noether locus} of moduli space $U^s_C(2^{g-1-l},d_{m,l})$. Then it is possible to show that $\mathcal{H}_l$ is actually isomorphic to such Brill-Noether locus as we did for del Pezzo threefold of degree $4$. Nevertheless, it is not clear to us which condition we should choose to reconstruct $\mathcal{H}_l$ inside of $U^s_C(2^{g-1-l},d_{m,l})$. We will study this problem in the updated version of the paper.

\bibliography{ref}

\newcommand{\etalchar}[1]{$^{#1}$}
\begin{thebibliography}{BEH{\etalchar{+}}23}

\bibitem[APR19]{rota}
Matteo Altavilla, Marin Petkovic, and Franco Rota.
\newblock Moduli spaces on the {K}uznetsov component of {F}ano threefolds of index $2$, arXiv:1908.10986, 2019.

\bibitem[BEH{\etalchar{+}}23]{beauville2023symmetric}
A~Beauville, A~Etesse, A~H{\"o}ring, J~Liu, and C~Voisin.
\newblock Symmetric tensors on the intersection of two quadrics and lagrangian fibration.
\newblock {\em arXiv preprint arXiv:2304.10919}, 2023.

\bibitem[BO95]{bondal1995semiorthogonal}
Alexei Bondal and Dmitri Orlov.
\newblock Semiorthogonal decomposition for algebraic varieties.
\newblock {\em arXiv preprint alg-geom/9506012}, 1995.

\bibitem[BO01]{bondal2001reconstruction}
Alexei Bondal and Dmitri Orlov.
\newblock Reconstruction of a variety from the derived category and groups of autoequivalences.
\newblock {\em Compositio Mathematica}, 125(3):327--344, 2001.

\bibitem[DR76]{desale1976classification}
Usha~V Desale and Sundararaman Ramanan.
\newblock Classification of vector bundles of rank 2 on hyperelliptic curves.
\newblock {\em Inventiones mathematicae}, 38:161--185, 1976.

\bibitem[FLZ23]{feyzbakhsh23torelli}
Soheyla Feyzbakhsh, Zhiyu Liu, and Shizhuo Zhang.
\newblock New perspectives on categorical torelli theorems for del pezzo threefolds.
\newblock {\em arXiv preprint arXiv:2304.01321}, 2023.

\bibitem[Kuz03]{kuznetsov2003derived}
Alexander Kuznetsov.
\newblock Derived categories of cubic and v14 threefolds.
\newblock {\em arXiv preprint math/0303037}, 2003.

\bibitem[New68]{newstead1968stable}
Peter~E Newstead.
\newblock Stable bundles of rank 2 and odd degree over a curve of genus 2.
\newblock {\em Topology}, 7(3):205--215, 1968.

\bibitem[Ott88]{ottaviani1988spinor}
Giorgio Ottaviani.
\newblock Spinor bundles on quadrics.
\newblock {\em Transactions of the American mathematical society}, 307(1):301--316, 1988.

\bibitem[Ram81]{ramanan1981orthogonal}
Sundararaman Ramanan.
\newblock Orthogonal and spin bundles over hyperelliptic curves.
\newblock In {\em Proceedings of the Indian Academy of Sciences-Mathematical Sciences}, volume~90, pages 151--166. Springer India New Delhi, 1981.

\bibitem[Rei72]{reid1972complete}
Miles~A Reid.
\newblock {\em The complete intersection of two or more quadrics}.
\newblock PhD thesis, University of Cambridge Cambridge, 1972.

\end{thebibliography}

\bibliographystyle{alpha}

\end{document}